\theoremstyle{plain}% Theorem-like structures
\newtheorem{theorem}{Theorem}[section]
\newtheorem{lemma}[theorem]{Lemma}
\newtheorem{corollary}[theorem]{Corollary}
\theoremstyle{definition}
\newtheorem{definition}[theorem]{Definition}
\theoremstyle{remark}
\newtheorem{remark}{Remark}
\title{Characterization and Analysis of Generalized Grey Incomplete Gamma Noise}
\author{W.~Bock$^1$, L. Cristofaro$^2$\\[.3cm]
$^1$ {\small Rheinland-Pf\"alzische Technische Universit\"at Kaiserslautern-Landau,}\\{\small  Fachbereich Mathematik,}\\ {\small Gottlieb-Daimler-Stra{\ss}e 48,}\\ {\small 67663 Kaiserslautern, Germany}\\
{\small E-Mail: bock@mathematik.uni-kl.de}\\[.2cm]
$^2$ {\small Sapienza University of Rome, Italy }\\
{\small E-Mail: lorenzo.cristofaro@uniroma1.it}}
\begin{document}

\maketitle

\vspace{10pt}
%\begin{indented}
%\item[]July 2019
%\end{indented}

\begin{abstract}
	The grey incomplete gamma distributions was established by one of the authors in a previous publication. In this article we use the Kondratiev characterization theorem to identify those via a suitable Laplace transform with holomorphic functions with suitable properties. We establish theorems for the integration and convergence of sequences of these distributions. As direct applications of these analytic tools we give the examples of Donsker's delta function, the local time, identify the time-derivative of the process as a suitable distribution and define the Gamma grey Ornstein-Uhlenbeck process.
\end{abstract}
%
% Uncomment for keywords
%\vspace{2pc}
%\noindent{\it Keywords}: XXXXXX, YYYYYYYY, ZZZZZZZZZ
%
% Uncomment for Submitted to journal title message
%\submitto{\jpa}
%
% Uncomment if a separate title page is required
%\maketitle
% 
% For two-column output uncomment the next line and choose [10pt] rather than [12pt] in the \documentclass declaration
%\ioptwocol
%

\section{Introduction}
\label{sec:introduction}

During the last decades white noise analysis has evolved into an infinite dimensional distribution theory, with rapid developments in mathematical structure and applications in various domains, see e.g.~the monographs \cite{Hid93, O94, Kuo96, HS17}. Various characterization theorems \cite{PS91, KLPSW96, GNM22} are proven to build up a strong analytical foundation. Fractional Brownian motion, with its specific properties, such as short/long range dependence and self-similarity, with natural applications in different fields (e.g. mathematical finance, telecommunications engineering, etc.), has a very natural representation in white noise theory, see \cite{Oks03, Mis08}.

From the end of the Eighties and in the Ninties, these methods were generalized to a non-Gaussian infinite dimensional analysis, by transferring properties of the Gaussian measure to e.g.~ the Poisson measure \cite{I88,KSU, Oks04}, and other measures \cite{Sch90, ADKS96, Kon98}.

Almost at the same time, the subject of fractional calculus and its applications (like Riemann-Liouville integral and Caputo fractional derivatives) has gained considerable popularity and importance mainly due to its applications in diverse fields of science and engineering (e.g. \cite{skm93}, \cite{Scal12}) and also fractional stochastic differential equations driven by white noise, see \cite{HO22}. These operators and their generalizations have been used to model problems with anomalous dynamics in different settings (e.g. \cite{Toa15}, \cite{Cap21}, \cite{SEO07} and \cite{Asc21}).

By investigating the time-fractional heat equation, i.e. where
the time derivative is a Caputo derivative of fractional order, Schneider introduced the notion of grey Brownian motion. The link between grey Brownian motion
and fractional differential equations were also studied later by \cite{Mur08} and \cite{Pag12}.

The generalized grey brownian motion and the Non-Gaussian framework in infinite dimensional analysis to which it belongs, i.e. Mittag-Leffler Analysis, were founded in
\cite{GrotJahnI} and \cite{GrotJahnII}.

In these papers, the authors define the spaces of test functions and distributions in Mittag-Leffler analysis and also they develop the principal tools proper to the field of infinite dimensional calculus, characterization theorems and integral transforms.\\
In this article, we prove the existence of test functions and distributions spaces for the Gamma grey measure and we establish the characterization theorems and tools for the analysis of the corresponding distribution spaces, Section \ref{sec2}. Moreover, we give explicit examples of the use of this characterization on Donsker's delta function, Noise distribution and Ornstein-Uhlenbeck process, respectively in Section \ref{sec3}, \ref{sec4} and \ref{sec5}.

\section{The Gamma grey noise and its calculus}
\label{sec2} 

By the results in \cite{Beg22}, we know that the Upper Incomplete Gamma function is the Laplace transform of $f_{\rho}(y)=\mathbbm{1}_{(1,\infty)}(y)G^{0,1}_{1,1}\big[y\big|^\rho _1\big]$, for $\rho \in (0,1]$, where $G^{m,n}_{p,q}$ is the Meijer G-function, see e.g.~the appendix in \cite{Beg22} for the special definition in the particular case. We use the translator operator and the properties of Laplace transform and its inverse transform (hereinafter denoted by  $\mathcal{L}(f(\cdot))(s)$ and $\mathcal{L}^{-1}(F(\cdot))(x)$, respectively), such that 
\begin{equation}\label{defdensity} f_{\rho,\theta}(x)=\mathcal{L}^{-1}\left( \frac{\Gamma(\rho,\theta+\cdot)}{\Gamma(\rho,\theta)}\right)(x)=\frac{1}{\Gamma(1-\rho)\Gamma(\rho,\theta)} \frac{e^{-\theta x}}{x(x-1)^{\rho}}, \quad x > 1, \end{equation}
for $\rho \in (0,1]$ and $\theta>0$.\\
In \cite{Beg22}, the authors studied the case of Gamma-Grey space $(\mathcal{S}', \mathcal{B},\nu_{\rho,\theta})$ and showed that the Gamma Grey measure satisfy the following properties:

\begin{enumerate}
	\item[P1] For $\rho \in (0,1]$ and $\theta > 0$, $\nu_{\rho,\theta}$ has an
	analytic Laplace transform in a neighborhood of zero $\mathcal{U}_{\theta}\subset \mathcal{S}_{%
		\mathbb{C}}$:
	\begin{equation}\label{Laptransf}
		\mathcal{S}_{\mathbb{C}} \supset \mathcal{U}_{\theta} \ni \phi \mapsto \ell_{\nu_{\rho,\theta}} (\phi):=\int_{\mathcal{S%
			}^{\prime}}\exp{\langle \omega,\phi \rangle }d\nu_{\rho,\theta}(\omega)=\frac{\Gamma(\rho,\theta -\frac{1}{2}\langle \phi,\phi \rangle)}{\Gamma(\rho,\theta)}.
	\end{equation}%
	\newline
	
	\item[P2] For $\rho \in (0,1]$ and $\theta > 0$, $\nu_{\rho,\theta}(\mathcal{U}%
	)>0$ for any non-empty open subset $\mathcal{U}\subset \mathcal{S}^{\prime}$.%
	\newline
\end{enumerate}
where $\mathcal{U}_{\theta}=B_{\sqrt{\theta}}(0)\oplus i B_{\sqrt{\theta}}(0)$, and $B_{\sqrt{\theta}}(0)=\{ \phi \in \mathcal{S} | \, \|\phi\|_{L^2}<\sqrt{\theta} \}$.

\begin{remark}
	The upper Incomplete Gamma function can be represented also by the Mellin-Barnes integral as follows:
	
	\[ \Gamma(\rho)\Gamma(\rho,z)=\frac{1}{2 \pi i}\int_{\mathcal{L}}\frac{\Gamma(\rho+s)\Gamma(s)}{\Gamma(s+1)}x^{-s}\mathrm{d}s=H_{2,0}^{1,2}\left[z \, \Bigg| \genfrac{}{}{0pt}{}{(1,1)}{(0,1),(\rho,1)}\right],  \]
	for $z \in \mathbb{C}\backslash\{0\}$, see Equation (2.10) in \cite{BG20}.\\
	By Theorem 1.3 in \cite{Kil04}, the above $H$-function has a series representation on $\mathbb{C} \backslash \{0\}$. For $z=0$, we have that $\Gamma(\rho,z)=\Gamma(\rho)$.\\
	So that the the Laplace transform $\ell_{\nu_{\rho,\theta}}(\phi)$ is well-defined for all $\phi \in \mathcal{S}_{\mathbb{C}}$.
\end{remark}

\begin{remark}
	Let $\rho \in (0,1]$, $\theta >0$ and $\lambda \in \mathbb{R%
	}\backslash \{0\}$, then the function $$\phi \mapsto \int_{\mathcal{S%
		}^{\prime}}\exp{\lambda \langle \omega,\phi \rangle }d\nu_{\rho,\theta}(\omega)$$  is analytic for $\phi \in \mathcal{S}_{\mathbb{C}}$. If $\lambda=0$, then Equation (\ref{Laptransf}) holds for $\phi \in \mathcal{S}_{\mathbb{C}}$.
\end{remark}

Using the identity theorem for analytic functions, we extend the function $$\lambda \mapsto \int_{\mathcal{S%
	}^{\prime}}\exp(\lambda \langle \omega,\phi \rangle ) d\nu_{\rho,\theta}(\omega)$$ from $\lambda \in \mathbb{R}$ to $z \in \mathbb{C}$, such that that we obtain the following lemma.
\begin{lemma}
	\label{Lap in 1} Let $\rho \in (0,1]$, $\theta >0$ and $z \in \mathbb{C
	}$, then the exponential function $$\mathcal{S}^{\prime }\ni \omega
	\mapsto e^{|z \langle x,\phi \rangle |}$$ is integrable and
	\begin{equation}
		\int_{\mathcal{S}^{\prime }}e^{z \langle x,\phi \rangle }d\nu _{\rho
			,\theta }(x)=\frac{\Gamma (\rho ,\theta -\frac{z ^{2}}{2}\langle \phi
			,\phi \rangle )}{\Gamma (\rho ,\theta )},\qquad \text{ for }\phi \in \mathcal{S}_{\mathbb{C}}.  \label{le}
	\end{equation}\\
\end{lemma}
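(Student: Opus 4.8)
The plan is to extend the identity \eqref{Laptransf} from real scaling parameters to complex ones by analytic continuation, using the Kondratiev-style characterization that underlies property P1. The starting point is the observation recorded in the preceding remark: for $\lambda \in \mathbb{R}\setminus\{0\}$ the map $\phi \mapsto \int_{\mathcal{S}'}\exp(\lambda\langle\omega,\phi\rangle)\,d\nu_{\rho,\theta}(\omega)$ is analytic on all of $\mathcal{S}_{\mathbb{C}}$, and by a change of variables (absorbing $\lambda$ into $\phi$) it equals $\Gamma(\rho,\theta-\tfrac{\lambda^2}{2}\langle\phi,\phi\rangle)/\Gamma(\rho,\theta)$. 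Thus, for each fixed $\phi\in\mathcal{S}_{\mathbb{C}}$, the function
\begin{equation}
\lambda \mapsto \int_{\mathcal{S}'}\exp(\lambda\langle\omega,\phi\rangle)\,d\nu_{\rho,\theta}(\omega)
\end{equation}
agrees on the real line with the entire function $\lambda\mapsto \Gamma(\rho,\theta-\tfrac{\lambda^2}{2}\langle\phi,\phi\rangle)/\Gamma(\rho,\theta)$, the latter being entire in $\lambda$ because $\Gamma(\rho,\cdot)$ is entire (as the remark guarantees via its $H$-function / Mellin--Barnes representation) and $\lambda\mapsto\theta-\tfrac{\lambda^2}{2}\langle\phi,\phi\rangle$ is polynomial.

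The heart of the argument is to show that the left-hand integral, viewed as a function of $z\in\mathbb{C}$ (replacing $\lambda$), is itself holomorphic. For this I would first establish the integrability claim, namely that $\omega\mapsto e^{|z\langle x,\phi\rangle|}$ lies in $L^1(\nu_{\rho,\theta})$ for every $z\in\mathbb{C}$ and $\phi\in\mathcal{S}_{\mathbb{C}}$. Since $e^{|z\langle x,\phi\rangle|}\le e^{z\langle x,\phi\rangle}+e^{-z\langle x,\phi\rangle}$, this reduces to integrability of the exponential itself, which follows from P1 together with the entirety of the Laplace transform in the scaling parameter: the dominating bound is controlled by $\Gamma(\rho,\theta-\tfrac{|z|^2}{2}\langle\phi,\phi\rangle)$, finite because $\Gamma(\rho,\cdot)$ is entire. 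Having integrability, I would invoke a standard holomorphy criterion for integrals depending on a complex parameter (Morera plus Fubini, or dominated differentiation under the integral sign): local uniform $L^1$-domination of $z\mapsto e^{z\langle x,\phi\rangle}$ on compact subsets of $\mathbb{C}$ yields that $z\mapsto\int_{\mathcal{S}'}e^{z\langle x,\phi\rangle}\,d\nu_{\rho,\theta}(x)$ is holomorphic on all of $\mathbb{C}$.

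With both sides now known to be entire functions of $z$ that coincide on the real axis $\{z=\lambda\in\mathbb{R}\}$, the identity theorem for holomorphic functions forces them to coincide on all of $\mathbb{C}$, giving exactly \eqref{le}. The main obstacle I anticipate is the integrability and domination step: one must secure a bound on $e^{|z\langle x,\phi\rangle|}$ that is uniform in $z$ over compacta and integrable against $\nu_{\rho,\theta}$, and care is needed because $\phi\in\mathcal{S}_{\mathbb{C}}$ may be complex, so $\langle x,\phi\rangle$ need not be real and the quadratic form $\langle\phi,\phi\rangle$ need not be positive. The clean way to handle this is to split $\phi$ into real and imaginary parts and bound $|\langle x,\phi\rangle|$ by $|\langle x,\mathrm{Re}\,\phi\rangle|+|\langle x,\mathrm{Im}\,\phi\rangle|$, then control each real pairing by P1 applied to real test functions with a scaling parameter large enough to cover the compact set of $z$ values, the finiteness throughout being underwritten by the fact (from the remark) that $\Gamma(\rho,\cdot)$ extends to an entire function with a convergent series representation on $\mathbb{C}\setminus\{0\}$.
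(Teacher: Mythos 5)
Your overall strategy --- holomorphy of both sides in the scalar parameter plus the identity theorem --- is precisely what the paper does: the authors give no proof beyond the sentence preceding the lemma, which simply invokes the identity theorem to pass from $\lambda\in\mathbb{R}$ to $z\in\mathbb{C}$. So at the level of approach you match the paper, and you correctly isolate the two ingredients that actually need checking: holomorphy of $z\mapsto\int_{\mathcal{S}'}e^{z\langle\omega,\phi\rangle}\,d\nu_{\rho,\theta}(\omega)$ and holomorphy of the right-hand side.

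However, the mechanism you propose for the integrability/domination step fails, and the obstruction is genuine. You want to ``control each real pairing by P1 applied to real test functions with a scaling parameter large enough to cover the compact set of $z$ values,'' but P1 only supplies the Laplace transform on the bounded neighbourhood $\mathcal{U}_{\theta}$, and this restriction cannot be removed. Writing $\nu_{\rho,\theta}$ as the mixture $\int_{1}^{\infty}\mu_{r}\,f_{\rho,\theta}(r)\,\mathrm{d}r$ of Gaussian measures $\mu_{r}$ with covariance $r\langle\cdot,\cdot\rangle$ (the Bernstein representation the paper itself exploits in Section \ref{sec3}), Tonelli gives for real $\psi$
\begin{equation*}
\int_{\mathcal{S}'}e^{\langle\omega,\psi\rangle}\,d\nu_{\rho,\theta}(\omega)
=\frac{1}{\Gamma(1-\rho)\Gamma(\rho,\theta)}\int_{1}^{\infty}e^{\left(\frac{1}{2}\|\psi\|_{L^2}^{2}-\theta\right)r}\,\frac{\mathrm{d}r}{r(r-1)^{\rho}},
\end{equation*}
which diverges as soon as $\|\psi\|_{L^2}^{2}\geq 2\theta$, since $f_{\rho,\theta}$ has only the exponential tail $e^{-\theta r}$. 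Hence for real $\phi$ and real $z$ with $z^{2}\|\phi\|_{L^2}^{2}\geq 2\theta$ the function $e^{|z\langle\cdot,\phi\rangle|}$ is simply not integrable, no dominating function exists, and the left-hand side of \eqref{le} is not holomorphic there; the identity theorem can only propagate the formula within the region where the integral converges (which does include the purely imaginary case $z=i$ with real $\phi$ used in \eqref{CFonSC}). A second, smaller inaccuracy: $\Gamma(\rho,\cdot)$ is not entire for non-integer $\rho$ --- it has a branch point at the origin, since $\Gamma(\rho,w)=\Gamma(\rho)-w^{\rho}\sum_{k\geq0}\frac{(-w)^{k}}{k!(\rho+k)}$ --- so the claimed entirety of the right-hand side in $\lambda$ also needs qualification. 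These defects are inherited from the lemma as stated rather than introduced by you, but your plan for what you rightly call the ``main obstacle'' asserts a domination that provably does not hold.
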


In particular, for $z=i$, we have that 

\begin{equation} \label{CFonSC}
	\int_{\mathcal{S}^{\prime }}e^{i \langle x,\phi \rangle }d\nu _{\rho
		,\theta }(x)=\frac{\Gamma (\rho ,\theta +\frac{1}{2}\langle \phi
		,\phi \rangle )}{\Gamma (\rho ,\theta )},\qquad \text{ for }\phi \in \mathcal{S}_{\mathbb{C}}. 
\end{equation}

%\begin{lemma}
%For $z \in \mathbb{C}/{0}$ and $\xi \in %\mathcal{U}_{\theta}=B_{\sqrt{\theta/|z|^2}}(0)\oplus i %B_{\sqrt{\theta/|z|^2}}(0)$,

%\[ \int_{\mathcal{S}'} e^{z\langle \omega, \xi \rangle} \nu_{\rho,\theta}(d\omega)=\frac{1}{\Gamma(\rho,\theta)} \Gamma(\rho,\theta - \frac{|z|^2}{2} \langle \xi,\xi\rangle). \]
%\end{lemma}

%\begin{proof}
%Let $z \in \mathbb{C}$ and $\xi=\xi_1+i\xi_2$ such that $\xi_i \in B_{\sqrt{\theta /|z|^2}}(0)$ for $i=1,2$, we have that:
%\begin{eqnarray*}
% \int_{\mathcal{S}'} e^{z\langle \omega, \xi \rangle} \nu_{\rho,\theta}(d\omega)&= \int_{\mathcal{S}'} e^{\langle \omega, z\xi \rangle} \nu_{\rho,\theta}(d\omega)\\
% &=\frac{1}{\Gamma(\rho,\theta)}\Gamma(\rho, \theta - \frac{|z|^2}{2} \|\xi \|^2). \\
%\end{eqnarray*}
%where $\theta - \frac{1}{2}\|z\xi\|_{\mathcal{S}_{\mathbb{C}}}^2=\theta-\frac{|z|^2}{2}(\|\xi_1\|_{\mathcal{S}}^2+\|\xi_2\|_{\mathcal{S}}^2)>0$.

%\end{proof}

%By this lemma we have :

We have that Appell System exists for the Gamma Grey measure, see \cite{Beg22}. By Lemma 4.12, Sec. 5 and Sec. 6 in \cite{Kon98}, the test function space, i.e. $(\mathcal{S})^{1}_{\nu_{\rho,\theta}}$, and the distribution space, i.e. $(\mathcal{S})^{-1}_{\nu_{\rho,\theta}}$, exist, and we have:
\[  (\mathcal{S})^{1}_{\nu_{\rho,\theta}}\subset L^2(\nu_{\rho,\theta}) \subset(\mathcal{S})^{-1}_{\nu_{\rho,\theta}}\]
endowed with the dual pairing $\langle\langle \cdot,\cdot \rangle \rangle_{\nu_{\rho,\theta}}$ between $ (\mathcal{S})^{-1}_{\nu_{\rho,\theta}} $ and $(\mathcal{S})^{1}_{\nu_{\rho,\theta}}$ which is the bilinear extension of the inner product of $L^2(\nu_{\rho,\theta})$. \\
We define the $S_{\nu_{\rho,\theta}}$-transform by means of the normalized exponential $e_{\nu_{\rho,\theta}}(\cdot, \xi)$:

\[ (S_{\nu_{\rho,\theta}}\Phi )(\xi):= \langle \langle \Phi, e_{\nu}(\xi,\cdot) \rangle \rangle_{\nu_{\rho,\theta}}:=\frac{\Gamma(\rho,\theta)}{\Gamma(\rho,\theta - \frac{1}{2} \langle \xi,\xi\rangle)}\int_{S'}e^{\langle \omega, \xi\rangle}\Phi(\omega)\nu_{\rho,\theta}(d\omega), \quad \xi \in  U_{p,q}. \]

\begin{remark}
	The holomorphy of $\ell_{\nu_{\rho,\theta}}(\cdot)$ in $\mathcal{U}_{\theta}$ and $\ell_{\nu_{\rho,\theta}}(0)=1$, can be obtained by considering an $r>0$ such that $\ell_{\nu_{\rho,\theta}}(\xi)>0$ for each $\xi \in B_r(0)\subset \mathcal{U}_{\theta}$, and $q \in \mathbb{N}$ such that $r<2^{-q}$. By the embedding property of the nuclear spaces, we have that there exists $p \in \mathbb{N}$ such that $\|\xi \|_p \leq \|\xi\|_{L^2}$. Hence, we define $U_{p,q}=\{\xi \in \mathcal{S}_{\mathbb{C}}| \, 2^q \|\xi\|_p <1\}$, thus the normalized exponential is a test function of finite order and we can introduce the S-transform, see Example 6 and Section 7 in \cite{Kon98}.
\end{remark}

The properties (P1) and (P2) and the previous remark allow us to state the following theorem, which is a special case of Thm 8.34 in \cite{Kon98}.
\begin{theorem}\label{thmStransisomor}
	The $S_{\nu_{\rho,\theta}}$-transform is a topological isomorphism from $(\mathcal{S})^{-1}_{\nu_{\rho,\theta}}$ to $\text{Hol}_{0}(\mathcal{S}_{\mathbb{C}})$.
\end{theorem}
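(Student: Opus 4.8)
The plan is to deduce the statement as a direct instance of the general Kondratiev characterization theorem, namely Thm~8.34 in \cite{Kon98}: for a measure $\mu$ on $\mathcal{S}'$ that admits an Appell system and whose Laplace transform is holomorphic near the origin with value $1$ at $0$ and strictly positive on open sets, the associated $S_\mu$-transform is a topological isomorphism from $(\mathcal{S})^{-1}_\mu$ onto $\mathrm{Hol}_0(\mathcal{S}_{\mathbb{C}})$. Consequently the entire proof consists in certifying that the Gamma grey measure $\nu_{\rho,\theta}$ satisfies the standing hypotheses of that theorem; once these are in place, its conclusion transfers verbatim.

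First I would recall that an Appell system exists for $\nu_{\rho,\theta}$, as established in \cite{Beg22} and invoked via Lemma~4.12 and Sections~5 and~6 of \cite{Kon98}, so that the triple $(\mathcal{S})^1_{\nu_{\rho,\theta}}\subset L^2(\nu_{\rho,\theta})\subset(\mathcal{S})^{-1}_{\nu_{\rho,\theta}}$ and the normalized exponentials $e_{\nu_{\rho,\theta}}(\cdot,\xi)$ are well defined. Next I would verify hypothesis (P1). By \eqref{Laptransf} the Laplace transform is $\ell_{\nu_{\rho,\theta}}(\phi)=\Gamma(\rho,\theta-\tfrac12\langle\phi,\phi\rangle)/\Gamma(\rho,\theta)$, which takes the value $1$ at $\phi=0$; for holomorphy on $\mathcal{U}_\theta=B_{\sqrt\theta}(0)\oplus iB_{\sqrt\theta}(0)$ I would write $\phi=a+ib$ with $\|a\|_{L^2},\|b\|_{L^2}<\sqrt\theta$ and check, using bilinearity of the dual pairing, that $\mathrm{Re}\big(\theta-\tfrac12\langle\phi,\phi\rangle\big)=\theta-\tfrac12\big(\|a\|_{L^2}^2-\|b\|_{L^2}^2\big)>\tfrac{\theta}{2}>0$, so the argument stays in the right half-plane, away from the branch point at the origin and the branch cut of the upper incomplete Gamma function. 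On that region $z\mapsto\Gamma(\rho,z)$ is holomorphic, as made explicit by the series / $H$-function representation recalled in the first Remark, and its composition with the entire map $\phi\mapsto\theta-\tfrac12\langle\phi,\phi\rangle$ is therefore holomorphic. Hypothesis (P2), positivity on nonempty open subsets of $\mathcal{S}'$, is precisely the property (P2) already recorded.

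With both hypotheses secured, I would use the Remark preceding the theorem to pin down the scales: pick $r>0$ with $\ell_{\nu_{\rho,\theta}}>0$ on $B_r(0)\subset\mathcal{U}_\theta$, then $q\in\mathbb{N}$ with $r<2^{-q}$ and, by nuclearity of $\mathcal{S}$, $p\in\mathbb{N}$ with $\|\xi\|_p\le\|\xi\|_{L^2}$, so that $e_{\nu_{\rho,\theta}}(\cdot,\xi)$ is a test function of finite order on $U_{p,q}$ and $S_{\nu_{\rho,\theta}}$ is well defined there. Thm~8.34 of \cite{Kon98} then delivers simultaneously the continuity and injectivity of $S_{\nu_{\rho,\theta}}$ into $\mathrm{Hol}_0(\mathcal{S}_{\mathbb{C}})$ and the surjectivity with continuous inverse, i.e. the topological isomorphism. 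I expect the only genuinely delicate point to be the holomorphy check in (P1): one must be sure that the perturbed argument $\theta-\tfrac12\langle\phi,\phi\rangle$ never meets the non-holomorphy locus of $\Gamma(\rho,\cdot)$ as $\phi$ ranges over the complex neighborhood, which is exactly what forces the radius $\sqrt\theta$ in the definition of $\mathcal{U}_\theta$; the remaining steps are bookkeeping inside the already-constructed Kondratiev framework.
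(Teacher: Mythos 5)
Your proposal matches the paper's own treatment: the paper likewise obtains the theorem as a direct special case of Thm~8.34 in \cite{Kon98}, citing the existence of the Appell system from \cite{Beg22}, properties (P1)--(P2), and the preceding remark that fixes $r$, $p$, $q$ so that the normalized exponential is a test function of finite order. Your explicit verification that $\mathrm{Re}\big(\theta-\tfrac12\langle\phi,\phi\rangle\big)>\theta/2$ on $\mathcal{U}_\theta$ is a correct, slightly more detailed justification of the holomorphy in (P1) that the paper takes for granted.
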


The above characterization theorem leads directly to two corollaries for integrals of elements in $(\mathcal{S})^{-1}_{\nu_{\rho,\theta}}$ in a weak sense and the convergence of sequences in $(\mathcal{S})^{-1}_{\nu_{\rho,\theta}}$.
\begin{theorem}[Thm.~4.10~in \cite{GrotJahnI}]\label{thmIntFunc}
	Let $(T, \mathcal{A},\mu)$ be a measurable space and $\Phi_t \in (\mathcal{N})^{-1}_{\nu_{\rho,\theta}}$ for all $t \in T$. Let $\mathcal{U}\subset \mathcal{N}_{\mathbb{C}}$ be an appropriate neighbourhood of zero and $C>0$, such that:
	\begin{itemize}
		\item[i)] $(S_{\nu_{\rho,\theta}}\Phi_{\cdot})(\xi):T \to \mathbb{C}$ is measurable for all $\xi \in \mathcal{U}$;
		\item[ii)] $\int_T |(S_{\nu_{\rho,\theta}}\Phi_t)(\xi)|\mathrm{d}\mu(t)\leq C$ for all $\xi \in \mathcal{U}$.
	\end{itemize}
	Then there exists $\Psi \in (\mathcal{S})^{-1}_{\nu_{\rho,\theta}}$ such that for all $\xi \in \mathcal{U}$
	\[ S_{\nu_{\rho,\theta}}\Psi(\xi) = \int_T S_{\nu_{\rho,\theta}}\Phi_t(\xi)\mathrm{d}\mu(t).  \]
	We denote $\Psi$ by $\int_T \Phi_t \mathrm{d}\mu(t)$ and call it the weak integral of $\Phi$.
\end{theorem}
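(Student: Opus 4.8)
The plan is to prove this by characterizing the weak integral through its $S_{\nu_{\rho,\theta}}$-transform, exploiting Theorem \ref{thmStransisomor}. Since the $S_{\nu_{\rho,\theta}}$-transform is a topological isomorphism onto $\mathrm{Hol}_0(\mathcal{S}_{\mathbb{C}})$, it suffices to exhibit a function $F:\mathcal{U}\to\mathbb{C}$, defined by
\[
F(\xi):=\int_T (S_{\nu_{\rho,\theta}}\Phi_t)(\xi)\,\mathrm{d}\mu(t),
\]
and to show that $F$ is the restriction to $\mathcal{U}$ of an element of $\mathrm{Hol}_0(\mathcal{S}_{\mathbb{C}})$; the preimage $\Psi:=S_{\nu_{\rho,\theta}}^{-1}F$ is then the desired distribution. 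First I would observe that hypothesis (ii) guarantees that the integral defining $F(\xi)$ converges absolutely and is uniformly bounded by $C$ on $\mathcal{U}$, so $F$ is well-defined and bounded there; hypothesis (i) ensures the integrand is measurable in $t$ for each fixed $\xi$, so the integral makes sense as a Lebesgue integral.

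The core of the argument is to verify that $F$ extends to a holomorphic function on a neighborhood of zero in $\mathcal{S}_{\mathbb{C}}$, i.e.\ that $F\in\mathrm{Hol}_0(\mathcal{S}_{\mathbb{C}})$. Recall that membership in $\mathrm{Hol}_0$ requires that $F$ be Gâteaux-holomorphic (ray-analytic) and locally bounded on some neighborhood of $0$. The local boundedness is immediate from (ii). For holomorphy, I would restrict to finite-dimensional slices: fix $\xi_0$ and directions $\eta_1,\dots,\eta_n\in\mathcal{S}_{\mathbb{C}}$ and consider the map $(z_1,\dots,z_n)\mapsto F(\xi_0+\sum_k z_k\eta_k)$ on a small polydisc. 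For each fixed $t$, the integrand $\xi\mapsto(S_{\nu_{\rho,\theta}}\Phi_t)(\xi)$ is an element of $\mathrm{Hol}_0(\mathcal{S}_{\mathbb{C}})$ (again by Theorem \ref{thmStransisomor}), hence holomorphic in these complex variables on a common polydisc contained in $\mathcal{U}$. Combining the uniform integrable bound from (ii) with Morera's theorem and Fubini's theorem — integrating $F$ around closed contours in each variable and swapping the order of the $t$-integral and the contour integral — shows that $F$ restricted to each such slice is holomorphic. This slice-wise holomorphy together with local boundedness yields $F\in\mathrm{Hol}_0(\mathcal{S}_{\mathbb{C}})$.

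Having established $F\in\mathrm{Hol}_0(\mathcal{S}_{\mathbb{C}})$, I would apply the isomorphism of Theorem \ref{thmStransisomor} to define $\Psi:=S_{\nu_{\rho,\theta}}^{-1}F\in(\mathcal{S})^{-1}_{\nu_{\rho,\theta}}$. By construction $S_{\nu_{\rho,\theta}}\Psi=F$, which is precisely the identity
\[
(S_{\nu_{\rho,\theta}}\Psi)(\xi)=\int_T (S_{\nu_{\rho,\theta}}\Phi_t)(\xi)\,\mathrm{d}\mu(t),\qquad \xi\in\mathcal{U},
\]
asserted in the theorem, and the naming of $\Psi$ as the weak integral $\int_T\Phi_t\,\mathrm{d}\mu(t)$ is then just definitional. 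The main obstacle is the holomorphy verification: one must justify the interchange of integration over $T$ with the complex-analytic structure (equivalently, with the contour integrals in Morera's theorem), and this is where hypotheses (i) and (ii) do the essential work, supplying measurability for Fubini and the dominating bound $C$ needed to guarantee the limit/integral exchange. Since this is the specialization of Theorem 4.10 of \cite{GrotJahnI} to the present Gamma grey setting, the argument is structurally parallel to the Mittag-Leffler case, the only genuine input being that our $S_{\nu_{\rho,\theta}}$-transform enjoys the same isomorphism property, which is exactly the content of Theorem \ref{thmStransisomor}.
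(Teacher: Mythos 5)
Your proposal is correct and follows essentially the same route as the paper, which simply defers to Theorem 4.10 of the Mittag-Leffler analysis paper \cite{GrotJahnI}: there too one defines $F(\xi)=\int_T (S_{\nu_{\rho,\theta}}\Phi_t)(\xi)\,\mathrm{d}\mu(t)$, verifies local boundedness and G\^ateaux holomorphy via the uniform bound together with a Morera/Fubini interchange, and then inverts the $S$-transform using the characterization theorem. Your write-up fills in exactly the details the paper leaves to that reference.
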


\begin{proof}
	The proof is similar to Theorem 4.10 in \cite{GrotJahnI}.
\end{proof}

\begin{theorem}\label{thmStransConverg}
	Let $\{\Phi_n\}_{n\in \mathbb{N}}$ be a sequence in $(\mathcal{S})_{\nu_{\rho,\theta}}^{-1}$. Then $\{\Phi_n\}_{n \in \mathbb{N}}$ converges strongly in $(\mathcal{S})_{\nu_{\rho,\theta}}^{-1}$ if and only if there exist $p,q \in \mathbb{N}$ with the following two properties:
	\begin{itemize}
		\item[i)] $\{(S_{\nu_{\rho,\theta}}\Phi_n) (\xi)\}_{n \in \mathbb{N}}$ is a Cauchy sequence for all $\xi \in U_{p,q}$;
		\item[ii)] $S_{\nu_{\rho,\theta}}(\Phi_n)$ is holomorphic on $U_{p,q}$ and there is a constrant $C>0$ such that
		\[ |S_{\nu_{\rho,\theta}}(\Phi_n) (\xi)|\leq C  \]
		for all $\xi \in U_{p,q}$ and for all $n \in \mathbb{N}$.
	\end{itemize} 
\end{theorem}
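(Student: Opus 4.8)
The plan is to transport the entire statement through the topological isomorphism $S_{\nu_{\rho,\theta}}\colon (\mathcal{S})^{-1}_{\nu_{\rho,\theta}} \to \text{Hol}_{0}(\mathcal{S}_{\mathbb{C}})$ of Theorem \ref{thmStransisomor}, so that strong convergence of $\{\Phi_n\}$ in $(\mathcal{S})^{-1}_{\nu_{\rho,\theta}}$ becomes exactly convergence of the image sequence $\{S_{\nu_{\rho,\theta}}\Phi_n\}$ in $\text{Hol}_{0}(\mathcal{S}_{\mathbb{C}})$. To exploit this I would use the standard description of $\text{Hol}_{0}(\mathcal{S}_{\mathbb{C}})$ as the regular inductive limit of the Banach spaces $E_{p,q}$ of holomorphic functions that are bounded on the balls $U_{p,q} = \{\xi \in \mathcal{S}_{\mathbb{C}} : 2^q\|\xi\|_p < 1\}$, equipped with the supremum norm $\|F\|_{p,q} = \sup_{\xi \in U_{p,q}} |F(\xi)|$. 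Since $S_{\nu_{\rho,\theta}}$ is a topological isomorphism, it is enough to prove that $\{F_n\} := \{S_{\nu_{\rho,\theta}}\Phi_n\}$ converges in this inductive limit if and only if conditions (i) and (ii) hold.

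For the ``only if'' direction, assume $\{F_n\}$ converges in $\text{Hol}_{0}(\mathcal{S}_{\mathbb{C}})$. Because this inductive limit is regular (a structural property of the Kondratiev characterization framework, cf.~\cite{Kon98}), a convergent sequence is contained and convergent in a single step $E_{p,q}$; that is, there exist $p,q \in \mathbb{N}$ with $F_n \to F$ in $\|\cdot\|_{p,q}$. Norm convergence immediately yields the uniform bound of (ii), since each $F_n$ is then holomorphic on $U_{p,q}$ and a convergent sequence in a Banach space is bounded, while evaluation at any fixed $\xi \in U_{p,q}$ shows that $\{F_n(\xi)\}$ is Cauchy, which is (i).

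For the ``if'' direction, suppose (i) and (ii) hold on some $U_{p,q}$ with bound $C$. Then the $F_n$ are holomorphic and uniformly bounded by $C$ on $U_{p,q}$ and converge pointwise there. The core of the argument is a Vitali-type theorem for holomorphic functions on the nuclear space: the uniform bound controls all Taylor coefficients at the origin through Cauchy estimates, and together with pointwise convergence this forces the coefficients to converge, makes the pointwise limit $F$ holomorphic, and upgrades the convergence to uniform convergence on a slightly smaller ball $U_{p,q+1}$, where halving the radius absorbs the loss in the Cauchy estimates. Hence $F_n \to F$ in $\|\cdot\|_{p,q+1}$, so $F \in \text{Hol}_{0}(\mathcal{S}_{\mathbb{C}})$ and $F_n \to F$ in $\text{Hol}_{0}(\mathcal{S}_{\mathbb{C}})$; setting $\Phi := S_{\nu_{\rho,\theta}}^{-1}F$ and invoking the isomorphism gives $\Phi_n \to \Phi$ strongly. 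I expect the main obstacle to be exactly this last step, namely verifying the Vitali/Montel mechanism in the infinite-dimensional nuclear setting and checking that the passage from pointwise-Cauchy-plus-uniform-bound to genuine norm convergence on a smaller ball is uniform in $n$, so that the limit lands in one Banach step of the inductive limit rather than existing only as a pointwise limit.
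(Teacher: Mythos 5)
Your overall strategy---transporting the statement through the topological isomorphism of Theorem \ref{thmStransisomor} and working in the Banach steps of bounded holomorphic functions on the balls $U_{p,q}$---is the same one underlying the proof the paper points to (Theorem 2.12 of \cite{GrotJahnII}, which in turn follows \cite{Kon98, KLPSW96}); the paper itself gives no details beyond that citation. Your ``only if'' direction is essentially fine: regularity of the inductive limit is the right structural input, and it can be replaced by the more direct argument that a strongly convergent sequence is strongly bounded, hence equicontinuous, hence contained in a bounded ball of a single Hilbert space $\mathcal{H}_{-p,-q}$, from which the uniform bound on the $S_{\nu_{\rho,\theta}}$-transforms follows by Cauchy--Schwarz, while (i) is immediate because each normalized exponential $e_{\nu_{\rho,\theta}}(\xi,\cdot)$ is a test function.

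The ``if'' direction, however, contains a genuine gap, located exactly where you flag the ``main obstacle''. The Vitali/Montel mechanism you propose---uniform bound plus pointwise convergence on $U_{p,q}$ upgrades to sup-norm convergence on $U_{p,q+1}$, i.e.\ on a smaller ball of the \emph{same} norm $\|\cdot\|_p$---fails in infinite dimensions. Take $\Phi_n=\langle\cdot,g_n\rangle$ (first chaos) with $(g_n)_{n}$ an orthonormal sequence in $\mathcal{H}_{-p}$: the functions $F_n(\xi)=\langle\xi,g_n\rangle$ are entire, uniformly bounded on $U_{p,q}$, and converge pointwise to $0$ (weak convergence of an orthonormal sequence), yet $\sup_{\xi\in U_{p,q'}}|F_n(\xi)|=2^{-q'}$ for every $n$ and every $q'$. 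Halving the radius absorbs nothing, because the loss sits in the Taylor kernels $f_{n,k}\in\mathcal{H}_{-p}^{\widehat{\otimes}k}$, for which pointwise convergence of $F_n$ only yields \emph{weak} convergence, and weak convergence plus a norm bound does not give norm convergence in a fixed $\mathcal{H}_{-p}$. What saves the theorem is the nuclearity of $\mathcal{S}$: one must pass to a strictly larger index $p'>p$ for which the embedding $\mathcal{H}_{-p}\hookrightarrow\mathcal{H}_{-p'}$ is Hilbert--Schmidt, so that bounded weakly convergent sequences converge in norm. Equivalently---and this is how the cited proofs run---one uses the quantitative estimate inside the characterization theorem to get $\sup_n\|\Phi_n\|_{-p',-q'}<\infty$, notes that (i) gives convergence of $\langle\langle\Phi_n,\varphi\rangle\rangle_{\nu_{\rho,\theta}}$ for $\varphi$ in the total set of normalized exponentials, and concludes strong convergence from boundedness plus convergence on a dense set together with the compactness of the embeddings of the nuclear triple. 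Your argument needs this change of index in $p$, not merely in $q$, to go through.
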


\begin{proof}
	The proof is similar to Theorem 2.12 in \cite{GrotJahnII}.
\end{proof}
\begin{definition}
	For $\Phi \in (\mathcal{S})^{-1}_{\nu_{\rho,\theta}}$ and $\xi \in U_{p,q}=\{ \xi \in \mathcal{S}_{\mathbb{C}} |\; 2^q |\xi|^2_{p}<1 \}$, we define the $T_{\nu_{\rho,\theta}}$-transform by
	\[  (T_{\nu_{\rho,\theta}}\Phi)(\xi)=\langle \langle \Phi, e^{i\langle \cdot, \xi \rangle } \rangle \rangle_{\nu_{\rho,\theta}}=\int_{S'}e^{i\langle \omega, \xi\rangle}\Phi(\omega)\nu_{\rho,\theta}(d\omega). \]
\end{definition}
\begin{remark}
	We recall that if $\xi=0$, then $\exp{(i \langle \cdot, \xi \rangle)}=1$ and we have 
	\[ (T_{\nu_{\rho,\theta}}\Phi)(0)=\mathbbm{E}_{\nu_{\rho,\theta}}(\Phi). \]
\end{remark}
\section{Donsker's Delta}
\label{sec3} 
In the following section we make use of the Tricomi confluent hypergeometric function

\[  \Psi(a,c;z)= \frac{\Gamma(1-c)}{\Gamma(1+a-c)} \, _1F_1(a;c;z) + \frac{\Gamma(c-1)}{\Gamma(a)}z^{1-c} \, _1F_1(1+a-c;2-c;z), \]
where $_1F_1$ is the Kummer confluent hypergeometric function. The function $_1F_1$ is a special case of the generalized hypergeometric function $_pF_q$ with $p=1$ and $q=1$, defined as follows
\[ _pF_q\left(\genfrac{}{}{0pt}{}{a_1,\dots, a_p}{b_1,\dots, b_q};z\right)=\sum_{k\geq 0}\frac{(a_1)_k,\dots, (a_p)_k}{(b_1)_k,\dots, (b_q)_k}\frac{z^k}{k!}, \quad z \in \mathbb{C} \]
where $a_i, b_j \in \mathbb{C}\backslash \mathbb{Z}^{-}$ for $i=1,\dots,p$ and $j=1,\dots,q$ and the notation $(\cdot)_k$ is for Pochhammer symbol, see  \cite{PrudIV}.\\
We will need the following lemma to compute the Donsker's Delta, see \cite{Beg22}.\\
\begin{lemma}\label{lemdonsker}
	Let be $\rho \in (0,1)$, $\theta>0$.
	Let $\eta \in L^2$, $s\in \mathbb{R}$ and $\xi \in U_{p,q}$,
	then \[ T_{\nu_{\rho,\theta}}(e^{is\langle \cdot, \eta\rangle })(\xi)=\frac{1}{\Gamma(\rho,\theta)}\Gamma(\rho, \theta + S(s;\eta,\xi)), \]
	where $S(s;\eta,\xi)=\frac{1}{2}(s^2\langle \eta,\eta \rangle + \langle \xi,\xi\rangle +s\langle \eta,\xi \rangle)$.\\
	Furthermore, it is absolutely integrable over $\mathbb{R}$, for $\xi \in U_{p,q}$.
\end{lemma}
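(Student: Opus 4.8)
The plan is to reduce the $T_{\nu_{\rho,\theta}}$-transform to the closed form of the characteristic functional already obtained in Lemma~\ref{Lap in 1}. Unfolding the definition of the $T$-transform and merging the two exponentials by bilinearity of the dual pairing gives
\[ T_{\nu_{\rho,\theta}}(e^{is\langle \cdot, \eta\rangle})(\xi) = \int_{\mathcal{S}'} e^{i\langle \omega, \xi\rangle}\,e^{is\langle \omega, \eta\rangle}\,\nu_{\rho,\theta}(d\omega) = \int_{\mathcal{S}'} e^{i\langle \omega,\, \xi + s\eta\rangle}\,\nu_{\rho,\theta}(d\omega). \]
The right-hand side is exactly the characteristic functional \eqref{CFonSC} evaluated at $\phi = \xi + s\eta$. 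Because $\eta \in L^2$ need not belong to $\mathcal{S}_{\mathbb{C}}$, I would first record that \eqref{CFonSC} extends from $\mathcal{S}_{\mathbb{C}}$ to $L^2_{\mathbb{C}}$: its right-hand side depends on $\phi$ only through the $L^2$-continuous map $\phi \mapsto \langle \phi,\phi\rangle$, so approximating $\eta$ by Schwartz functions and invoking dominated convergence (using the integrability granted by Lemma~\ref{Lap in 1}) legitimises the substitution $\phi = \xi + s\eta$.

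Substituting into \eqref{CFonSC} then yields
\[ T_{\nu_{\rho,\theta}}(e^{is\langle \cdot, \eta\rangle})(\xi) = \frac{1}{\Gamma(\rho,\theta)}\,\Gamma\!\Big(\rho,\ \theta + \tfrac{1}{2}\langle \xi + s\eta,\ \xi + s\eta\rangle\Big). \]
Expanding the bilinear form by symmetry and collecting the powers of $s$ identifies $\tfrac{1}{2}\langle \xi + s\eta, \xi + s\eta\rangle$ with the quadratic polynomial $S(s;\eta,\xi)$, which gives the asserted identity.

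For the absolute integrability over $\mathbb{R}$ I would analyse the decay of $s \mapsto \Gamma(\rho,\theta + S(s;\eta,\xi))$ as $|s|\to\infty$. The leading coefficient of $S$ in $s$ is $\tfrac{1}{2}\langle\eta,\eta\rangle = \tfrac{1}{2}\|\eta\|_{L^2}^2$, which is real and strictly positive for $\eta\neq 0$; hence $\mathrm{Re}\,S(s;\eta,\xi)\to +\infty$ quadratically while the imaginary part grows only linearly in $s$, so the argument $\theta + S$ enters, and stays in, a sector around the positive real axis. Invoking the standard asymptotics $\Gamma(\rho,z)\sim z^{\rho-1}e^{-z}$ as $|z|\to\infty$ with $\arg z\to 0$ gives a bound $|\Gamma(\rho,\theta+S)|\lesssim |S|^{\rho-1}e^{-\mathrm{Re}\,S}$, i.e. a Gaussian-type decay $e^{-c s^2}$ for large $|s|$, which is integrable; on the compact complementary range the integrand is continuous and bounded. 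The main obstacle is precisely this last step: one must make the complex-argument asymptotics of the incomplete gamma function rigorous, controlling its error term and verifying that $\theta + S(s;\eta,\xi)$ does not leave the sector of validity, so that the heuristic Gaussian decay becomes a genuine integrable majorant.
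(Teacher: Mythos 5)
Your derivation of the identity follows the same route as the paper's proof: unfold the $T_{\nu_{\rho,\theta}}$-transform, merge the exponentials into $e^{i\langle\omega,\,\xi+s\eta\rangle}$, and evaluate the characteristic functional \eqref{CFonSC} at $\phi=\xi+s\eta$; you are in fact more careful than the paper about extending \eqref{CFonSC} from $\mathcal{S}_{\mathbb{C}}$ to $L^2_{\mathbb{C}}$ (the paper merely remarks that $s\eta+\xi\in L^2_{\mathbb{C}}$ and applies the formula), and your expansion correctly produces the cross term $2s\langle\eta,\xi\rangle$, consistent with the paper's own proof and the subsequent Donsker computation — the coefficient $s\langle\eta,\xi\rangle$ in the lemma's statement is a typo. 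For the absolute integrability the two arguments genuinely diverge. You control $|\Gamma(\rho,\theta+S)|$ by the uniform large-$z$ asymptotics $\Gamma(\rho,z)\sim z^{\rho-1}e^{-z}$ in a sector, which is legitimate here since $\Re(\theta+S)\ge\theta-\tfrac12\|\xi_2\|^2>0$ for $\xi\in U_{p,q}$ and $\arg(\theta+S)$ stays in a compact subinterval of $(-\pi/2,\pi/2)$ uniformly in $s$; the step you flag as the main obstacle is precisely what the paper's appendix lemma provides in elementary form, namely $|\Gamma(\rho,z)|\le C_\rho\,\Gamma(\rho,\Re z)$ for $\Re z>0$, so you could cite that bound in place of the asymptotic expansion and close the gap immediately. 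The paper instead combines that appendix bound with the complete-monotonicity (Bernstein) representation $\Gamma(\rho,\theta+x)/\Gamma(\rho,\theta)=\int_0^\infty e^{-rx}f_{\rho,\theta}(r)\,\mathrm{d}r$, swaps the order of integration, evaluates the Gaussian integral in $s$ in closed form, and bounds the remaining $r$-integral via Cauchy--Schwarz. Your route is shorter if integrability is the only goal; the paper's explicit double-integral computation is the machinery reused verbatim to obtain the Tricomi-function formulas in the Donsker delta theorems that follow. One small point to make explicit in either version: $\eta\neq 0$ must be assumed, since for $\eta=0$ the integrand is constant in $s$ and the integrability claim fails.
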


\begin{proof}
	Let $\xi \in U_{p,q}$ and $\eta \in L^2$, $s\in \mathbb{R}$, we have that $s\eta+\xi \in L^2_{\mathbb{C}}$, then
	\begin{eqnarray}\label{eqnpf}
		T_{\nu_{\rho,\theta}}(e^{is\langle \cdot, \eta\rangle})(\xi)&=\notag \int_{\mathcal{S}'} e^{i\langle\omega, s \eta + \xi \rangle}\nu_{\rho,\theta}(d\omega)\\
		&=\dfrac{\Gamma\big(\theta+\frac{1}{2}\langle s\eta + \xi, s\eta + \xi\rangle\big)}{\Gamma(\rho,\theta)},\\
	\end{eqnarray}
	where the equality in \ref{eqnpf} holds for Equation (\ref{CFonSC}).\\
	We define $$S(s;\eta,\xi):=\frac{1}{2}\langle s\eta + \xi, s\eta + \xi\rangle=\frac{1}{2}(s^2\langle\eta,\eta \rangle+\langle \xi,\xi \rangle + 2s\langle \eta,\xi\rangle)$$, we have that $S(s;\eta,\xi)\geq 0$ for each $s \in \mathbb{R}$ and $\xi \in \mathcal{S}_{\mathbb{C}}$, $\eta \in L^2$.\\ 
	Note that $\Gamma(\rho,z) \in \mathbb{C}$ for $z \in \mathbb{C}$, and $| \Gamma(\rho,z)|< C_{\rho} \Gamma(\rho,\Re(z))$ with $C_{\rho}>0$ and $\Re(z)>0$, see the appendix.\\
	Thus, the have that $$| \Gamma(\rho,\theta + S(s;\eta,\xi))|< C_{\rho} \Gamma(\rho,\theta+\Re(S(s;\eta,\xi)))$$, where $$\Re(S(s;\eta,\xi))=\langle\xi_1 + s \eta, \xi_1 + s \eta \rangle - \langle \xi_2,\xi_2\rangle,$$ with $\langle\xi_1 + s \eta, \xi_1 + s \eta \rangle=\|\xi_1 + s \eta \|^2$, which is positive for each $s$ and $\|\xi_2\|^2<\theta$.\\
	Hence, we can apply Bernstein theorem on $\Gamma(\rho,\theta + \Re(S(s;\eta,\xi)))$ for each $s \in \mathbb{R}$ on the line below denoted by $*$, see \cite{Schil12}.\\
	We have that:
	
	\begin{eqnarray*}
		\int_{\mathbb{R}} |T_{\nu_{\rho,\theta}}(e^{is\langle \cdot, \eta\rangle})(\xi)| \mathrm{d}s&=&\frac{1}{\Gamma(\rho,\theta)} \int_{\mathbb{R}} |\Gamma(\rho,\theta + \frac{1}{2}S(s;\eta,\xi))|\mathrm{d}s \\
		&\leq&\frac{C_\rho}{\Gamma(\rho,\theta)}\int_\mathbb{R}\Gamma(\rho,\theta +\Re( \frac{1}{2}S(s;\eta,\xi)))\mathrm{d}s\\
		&\overset{*}{=}&\frac{C_\rho}{\Gamma(\rho,\theta)} \int_{\mathbb{R}} \int_{0}^\infty e^{-r\Re(S(s;\eta,\xi))}f_{\rho,\theta}(r)\mathrm{d}r\mathrm{d}s \\
		&=&\frac{C_\rho}{\Gamma(\rho,\theta)}\int_0^\infty f_{\rho,\theta}(r) \int_{\mathbb{R}}e^{-r\Re(S(s;\eta,\xi))} \mathrm{d}s\mathrm{d}r\\
		&=&\frac{C_\rho \sqrt{2\pi}}{\Gamma(\rho,\theta)\sqrt{\langle \eta,\eta \rangle}}\int_0^\infty  f_{\rho,\theta}(r) r^{-1/2}e^{-\frac{r}{2} \left( \|\xi_1\|^2 - \|\xi_2\|^2\right)+ \frac{\langle \eta,\xi_1 \rangle}{\langle \eta,\eta\rangle} r}  \mathrm{d}r\\
		&=&\frac{\sqrt{2\pi}C_\rho}{\Gamma(1-\rho)\Gamma(\rho,\theta)\sqrt{\langle \eta,\eta \rangle}}\int_1^\infty \frac{e^{-\theta r}}{r(r-1)^{\rho}}r^{-1/2}e^{-\frac{r}{2} \left( \|\xi_1\|^2 -\|\xi_2\|^2-\frac{\langle \eta,\xi_1\rangle^2}{\langle \eta,\eta\rangle}     \right)}\mathrm{d}r  \\
		&<&\frac{\sqrt{2\pi}C_\rho}{\Gamma(\rho,\theta)\sqrt{\langle \eta,\eta \rangle}}\int_1^\infty e^{-\theta r/2} r^{-1} (r-1)^{-\rho} \mathrm{d}r<\infty
	\end{eqnarray*}
	
	where in the last inequality we use the fact that for $\xi \in U_{p,q}$
\end{proof}
Indeed we have
\[ \|\xi_1\|^2-\|\xi_2\|^2 -\frac{\langle \eta,\xi_1\rangle^2}{\langle \eta,\eta\rangle}>-\theta, \]
by Cauchy-Schwartz inequality.\\
The following representation holds for the Donsker's Delta in the space $(\mathcal{S})^{-1}_{\nu_{\rho,\theta}}$ as a weak integral in the sense of the Theorem \ref{thmIntFunc}:
\[ \delta(\langle \cdot, \eta \rangle)=\frac{1}{2\pi} \int_{\mathbb{R}} e^{is\langle \cdot, \eta \rangle}\mathrm{d}s. \]
\begin{remark}
	In view of Theorem \ref{thmIntFunc}, we choose $(\mathbb{R},\mathcal{B}(\mathbb{R}),\mathrm{d}s)$ as measurable space and $\Phi_s(\cdot)=e^{is\langle \cdot, \eta \rangle}$ for some $\eta \in L^2_{\mathbb{C}}$. The first assumption holds for the composition of measurable functions and the second one holds for Lemma \ref{lemdonsker}.\\ Moreover, we note that the S-transform and the T-transform differs for a constant on the complexification space.
\end{remark}
\begin{theorem}
	Let $\rho \in (0,1)$ and $\theta>0$. Let $\eta \in L^2_{\mathbb{C}}$ and $\xi \in U_{p,q}$, then
	\[ T_{\nu_{\rho,\theta}}(\delta(\langle \cdot, \eta \rangle))(\xi)=\frac{e^{-k}}{\sqrt{2\pi \langle \eta,\eta \rangle} \Gamma(\rho,\theta)} \Psi(1-\rho, 1/2 -\rho; k),\]
	where $k=\theta+\frac{1}{2}(\langle \xi,\xi\rangle -\frac{\langle \eta,\xi\rangle^2}{\langle \eta,\eta \rangle })$.%, or, equivalently,
	%\[ T_{\nu_{\rho,\theta}}(\delta(\langle \cdot, \eta \rangle))(\xi)=\frac{1}{\sqrt{2 (\theta + c )\pi \langle \eta, \eta \rangle}\Gamma(\rho,\theta)} \sum_{k\geq 0} \frac{(-1)^k}{k!}\frac{\Gamma(1/2 - k,\theta + c)}{\Gamma(\rho-1-k)}(\theta + c)^k  \]
	%where $c= \frac{\|\xi \|^2}{2}-\frac{\langle\xi,\xi \rangle^2}{2\langle \eta,\eta \rangle}$. 
\end{theorem}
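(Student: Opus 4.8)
The plan is to exploit the weak-integral representation of Donsker's delta together with the explicit formula from Lemma~\ref{lemdonsker}. Since $\delta(\langle\cdot,\eta\rangle)=\frac{1}{2\pi}\int_{\mathbb{R}}e^{is\langle\cdot,\eta\rangle}\,\mathrm{d}s$ exists as a weak integral in the sense of Theorem~\ref{thmIntFunc} (the absolute integrability hypothesis being exactly the content of the last assertion of Lemma~\ref{lemdonsker}), and since the $T$-transform differs from the $S$-transform only by a multiplicative constant on $\mathcal{S}_{\mathbb{C}}$, I would pass the $T$-transform under the integral sign to obtain
\[ T_{\nu_{\rho,\theta}}(\delta(\langle\cdot,\eta\rangle))(\xi)=\frac{1}{2\pi}\int_{\mathbb{R}}T_{\nu_{\rho,\theta}}(e^{is\langle\cdot,\eta\rangle})(\xi)\,\mathrm{d}s=\frac{1}{2\pi\Gamma(\rho,\theta)}\int_{\mathbb{R}}\Gamma(\rho,\theta+S(s;\eta,\xi))\,\mathrm{d}s. \]
Writing $a=\langle\eta,\eta\rangle$, $b=\langle\eta,\xi\rangle$, $c=\langle\xi,\xi\rangle$, I would complete the square in the quadratic $S(s;\eta,\xi)=\tfrac12(as^2+2bs+c)$ and substitute $u=s+b/a$, so that $\theta+S=k+\tfrac{a}{2}u^2$ with $k=\theta+\tfrac12\bigl(c-b^2/a\bigr)$, precisely the constant in the statement. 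This reduces everything to evaluating $\int_{\mathbb{R}}\Gamma\bigl(\rho,k+\tfrac{a}{2}u^2\bigr)\,\mathrm{d}u$.

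For this core integral I would insert the integral representation $\Gamma(\rho,z)=\int_z^\infty t^{\rho-1}e^{-t}\,\mathrm{d}t$ and apply Fubini (licit by the integrability established in Lemma~\ref{lemdonsker}) to swap the order of integration over the region $\{t\ge k+\tfrac{a}{2}u^2\}$. The inner integral in $u$ is then elementary and produces a factor $2\sqrt{2(t-k)/a}$, leaving
\[ \int_{\mathbb{R}}\Gamma\Bigl(\rho,k+\tfrac{a}{2}u^2\Bigr)\,\mathrm{d}u=2\sqrt{\tfrac{2}{a}}\int_k^\infty t^{\rho-1}e^{-t}\sqrt{t-k}\,\mathrm{d}t. \]
After the shift $t=k+w$ and the rescaling $w=k\tau$, the right-hand side becomes $2\sqrt{2/a}\,e^{-k}k^{\rho+1/2}\int_0^\infty \tau^{1/2}(1+\tau)^{\rho-1}e^{-k\tau}\,\mathrm{d}\tau$, which I would recognize through the standard integral representation of the Tricomi function, $\Psi(\alpha,\gamma;z)=\frac{1}{\Gamma(\alpha)}\int_0^\infty e^{-z\tau}\tau^{\alpha-1}(1+\tau)^{\gamma-\alpha-1}\,\mathrm{d}\tau$, as $\Gamma(3/2)\,\Psi(3/2,\rho+3/2;k)$ (taking $\alpha=3/2$, $\gamma=\rho+3/2$, $z=k$).

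The last conceptual step, which I expect to be the only nonroutine point, is to convert $\Psi(3/2,\rho+3/2;k)$ into the form asserted in the theorem. This is achieved by the Kummer transformation $\Psi(\alpha,\gamma;z)=z^{1-\gamma}\Psi(\alpha-\gamma+1,2-\gamma;z)$, which with $\alpha=3/2$, $\gamma=\rho+3/2$ gives $\Psi(3/2,\rho+3/2;k)=k^{-\rho-1/2}\Psi(1-\rho,1/2-\rho;k)$; the accumulated power $k^{\rho+1/2}$ then cancels exactly. Collecting the constants $\Gamma(3/2)=\sqrt{\pi}/2$ and $\frac{1}{2\pi}\sqrt{2\pi/a}=1/\sqrt{2\pi a}$ yields the prefactor $e^{-k}/\bigl(\sqrt{2\pi\langle\eta,\eta\rangle}\,\Gamma(\rho,\theta)\bigr)$, which is the claimed identity.
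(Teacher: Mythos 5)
Your argument is correct and reaches the stated formula, but the computational core differs from the paper's. Both proofs begin identically: Donsker's delta as the weak integral $\frac{1}{2\pi}\int_{\mathbb{R}}e^{is\langle\cdot,\eta\rangle}\,\mathrm{d}s$, Theorem~\ref{thmIntFunc} plus Lemma~\ref{lemdonsker} to pass the $T$-transform inside, reducing everything to $\frac{1}{2\pi\Gamma(\rho,\theta)}\int_{\mathbb{R}}\Gamma(\rho,\theta+S(s;\eta,\xi))\,\mathrm{d}s$. From there the paper uses the subordination structure: it writes $\Gamma(\rho,\theta+S)/\Gamma(\rho,\theta)=\int_0^\infty e^{-rS}f_{\rho,\theta}(r)\,\mathrm{d}r$ with the explicit density \eqref{defdensity}, performs the Gaussian $s$-integral (valid verbatim for complex linear coefficient), and lands directly on $\int_0^\infty e^{-ku}(u+1)^{-3/2}u^{-\rho}\,\mathrm{d}u=\Gamma(1-\rho)\Psi(1-\rho,1/2-\rho;k)$ via Prudnikov 2.1.3.1. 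You instead complete the square in $s$, unfold the raw definition $\Gamma(\rho,z)=\int_z^\infty t^{\rho-1}e^{-t}\,\mathrm{d}t$, swap the order by Fubini, and arrive at the ``reflected'' representation $\Gamma(3/2)\Psi(3/2,\rho+3/2;k)$, which you then convert with the Kummer transformation $\Psi(a,c;z)=z^{1-c}\Psi(a-c+1,2-c;z)$. Your constants all check out ($\Gamma(3/2)=\sqrt{\pi}/2$ and $k^{\rho+1/2}$ cancels exactly). What your route buys is independence from the explicit form of $f_{\rho,\theta}$ and from the Bernstein-function machinery; what it costs is the extra Kummer step and, more importantly, a genuine dependence on real-variable geometry, as follows.

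The one point you should not gloss over is that $\xi\in U_{p,q}\subset\mathcal{S}_{\mathbb{C}}$ is complex, so $b=\langle\eta,\xi\rangle$ and $k$ are complex. Your substitution $u=s+b/a$ moves the integration contour off the real axis, and your Fubini step over the region $\{t\ge k+\tfrac{a}{2}u^2\}$ only makes literal sense when $k$ and $u$ are real. The clean repair is to run your computation for real $\xi$ (where $k\ge\theta>0$ by Cauchy--Schwarz and everything is a genuine planar Fubini argument) and then extend to all of $U_{p,q}$ by the identity theorem, since the left-hand side is holomorphic on $U_{p,q}$ by Theorem~\ref{thmStransisomor} and the right-hand side is holomorphic in $\xi$ because $z\mapsto\Psi(1-\rho,1/2-\rho;z)$ is holomorphic for $\Re z>0$ and $k$ stays in that half-plane. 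Alternatively, justify the contour shift via Cauchy's theorem using the decay of $\Gamma(\rho,\cdot)$ on horizontal strips. With that addendum the proof is complete.
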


\begin{proof}
	By Lemma \ref{lemdonsker}, we have that for $\eta \in L^2_{\mathbb{C}}$, $s \in \mathbb{R}$ and $\xi \in  U_{p,q}$, we have 
	
	\begin{eqnarray*}
		T_{\nu_{\rho,\theta}}(\delta(\langle \cdot, \eta \rangle))(\xi)&=&\int_{\mathcal{S}'}e^{i \langle \omega,\xi \rangle} \delta(\langle \omega, \eta \rangle) \nu_{\rho,\theta}(\mathrm{d}\omega)\\
		&\overset{*}{=}&\frac{1}{2\pi} \int_{\mathbb{R}} \frac{\Gamma(\rho,\theta + S(s; \eta,\xi))}{\Gamma(\rho,\theta)}\mathrm{d}s\\
		&\overset{**}{=}&\frac{1}{2\pi} \int_{\mathbb{R}} \int_{0}^\infty e^{-rS(s;\eta,\xi)}f_{\rho,\theta}(r)\mathrm{d}r \mathrm{d}s\\
		&=&\frac{1}{2\pi} \int_{0}^\infty f_{\rho,\theta}(r) e^{-\frac{1}{2} r \langle \xi,\xi \rangle}\int_{\mathbb{R}}e^{-\frac{1}{2}rs^2 \langle \eta,\eta \rangle - rs \langle \eta,\xi\rangle}\mathrm{d}s\mathrm{d}r\\
		&=&\frac{1}{\sqrt{2\pi \langle \eta,\eta \rangle}}\int_0^\infty f_{\rho,\theta}(r) r^{-\frac{1}{2}} e^{-\frac{1}{2}r (\langle \xi,\xi\rangle -\frac{\langle \eta,\xi\rangle^2}{ \langle \eta,\eta \rangle } )}\mathrm{d}r\\
		&=&\frac{1}{\sqrt{2\pi \langle \eta,\eta \rangle} \Gamma(\rho,\theta)\Gamma(1-\rho)}\int_1^\infty e^{-rk}r^{-3/2} (r-1)^{-\rho}\mathrm{d}r  \\
		&=&\frac{e^{-k}}{\sqrt{2\pi \langle \eta,\eta \rangle} \Gamma(\rho,\theta)\Gamma(1-\rho)} \int_0^\infty e^{-ku}(u+1)^{-3/2}u^{-\rho} \mathrm{d}u\\
		&\overset{***}{=}& \frac{e^{-k}}{\sqrt{2\pi \langle \eta,\eta \rangle} \Gamma(\rho,\theta)} \Psi(1-\rho, 1/2 -\rho; k).
		%&=&\frac{1}{\sqrt{2\pi \langle \eta,\eta \rangle} \Gamma(\rho,\theta)}\left(\frac{2}{\sqrt{\pi}} \Gamma(1-\rho)\Gamma(1+\rho)\, _1F_1(-\frac{1}{2},\frac{1}{2}-\rho,-k)\right)\\
		%&+&\frac{1}{\sqrt{2\pi \langle \eta,\eta \rangle} \Gamma(\rho,\theta)}\left(k^{\rho+1/2} \Gamma(-\frac{1}{2}-\rho)\, _1F_1(\rho,\frac{3}{2}+\rho,-k) \right),
	\end{eqnarray*}
	where we use Theorem \ref{thmIntFunc} in the line denoted by $*$, use Equation 2.19 in \cite{MatSax2009} in $**$, use 1 of 2.1.3 in \cite{PrudIV} in $***$ and $\Psi(a,b;z)$ is the Tricomi confluent hypergeometric function, denote $S(s; \eta,\xi):=\frac{1}{2}(s^2\langle \eta,\eta \rangle + \langle \xi,\xi\rangle +2s\langle \eta,\xi \rangle)$, $k=\theta+\frac{1}{2}(\langle \xi,\xi\rangle -\frac{\langle \eta,\xi\rangle^2}{\langle \eta,\eta \rangle })>\theta$ by Cauchy-Schwartz inequality.\\
	%By using property (2.1.5) in \cite{MatSax2009} with $\sigma=-1/2$ and applying Theorem 2.3 in \cite{MatSax2009}, we have 
	
	%\begin{eqnarray*}
	%& &\frac{1}{\sqrt{2\pi \langle \eta,\eta \rangle} \Gamma(\rho,\theta)}\int_0^\infty e^{-rk}r^{-1/2} G^{0,1}_{1,1}\Big[r \Big|^{\rho}_{1}\Big] dr\\
	%&=& \frac{1}{\sqrt{2\pi \langle \eta,\eta \rangle}\Gamma(\rho,\theta)} H^{2,0}_{1,2}\Big[ k\Big|^{(-1/2,1)}_{(0,1),(1/2 - \rho,1)}\Big]\\
	%&=&[\text{by } (2.9.21) \text{ in } \cite{MatSax2009} ]\\
	%&=& \frac{1}{\sqrt{2\pi \langle \eta,\eta \rangle}\Gamma(\rho,\theta)}k^{-\frac{2\rho +1}{4}}e^{-\frac{k}{2}} W_{-\frac{2\rho -5}{4}, \frac{2\rho-1}{4}}(k).
	%\end{eqnarray*}
	
	%where $W_{\lambda,\mu}(z)$ is the Whittaker function, see Sec.6.9 in \cite{ErdMag53}.
\end{proof}

%\begin{remark}

%We could define an open set $U$ as in %\cite{GrotJahnII}.

%\end{remark}

\begin{corollary}
	For $\rho \in (0,1)$ and $\theta>0$, it holds
	\[  \mathbb{E}_{\nu_{\rho,\theta}}(\delta(\langle \cdot, \eta\rangle))=\frac{e^{-\theta}}{\sqrt{2\pi \langle \eta,\eta \rangle} \Gamma(\rho,\theta)} \Psi(1-\rho, 1/2 -\rho; \theta).\]
\end{corollary}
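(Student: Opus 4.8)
The plan is to derive this corollary as the immediate special case $\xi = 0$ of the preceding theorem, using the remark that $(T_{\nu_{\rho,\theta}}\Phi)(0) = \mathbb{E}_{\nu_{\rho,\theta}}(\Phi)$. First I would set $\xi = 0$ in the expression $k = \theta + \tfrac{1}{2}\bigl(\langle \xi,\xi\rangle - \tfrac{\langle \eta,\xi\rangle^2}{\langle \eta,\eta\rangle}\bigr)$. Both $\langle \xi,\xi\rangle$ and $\langle \eta,\xi\rangle^2$ vanish identically at $\xi = 0$, so $k$ collapses to $\theta$. Substituting $k = \theta$ into the theorem's formula
\[
T_{\nu_{\rho,\theta}}(\delta(\langle \cdot, \eta\rangle))(\xi) = \frac{e^{-k}}{\sqrt{2\pi \langle \eta,\eta\rangle}\,\Gamma(\rho,\theta)}\,\Psi(1-\rho,\,1/2-\rho;\,k)
\]
yields precisely the claimed right-hand side.

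To justify that the left-hand side is the expectation, I would invoke the definition of the $T_{\nu_{\rho,\theta}}$-transform together with the stated remark: when $\xi = 0$ the integrand factor $e^{i\langle \omega, \xi\rangle}$ reduces to $1$, so
\[
(T_{\nu_{\rho,\theta}}\Phi)(0) = \int_{S'} \Phi(\omega)\,\nu_{\rho,\theta}(d\omega) = \mathbb{E}_{\nu_{\rho,\theta}}(\Phi),
\]
and taking $\Phi = \delta(\langle \cdot, \eta\rangle)$ gives the desired identification. Combining this with the evaluation of the theorem's formula at $\xi = 0$ completes the argument.

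The only point needing a word of care is that one must confirm $\xi = 0$ lies in the admissible set $U_{p,q}$, which is immediate since $U_{p,q}$ is a neighborhood of the origin and the theorem is valid for all $\xi \in U_{p,q}$; hence the formula extends continuously (indeed holomorphically) to $\xi = 0$. I do not expect any genuine obstacle here, since the corollary is a direct specialization rather than an independent result. The main thing to verify is simply the arithmetic that $k \to \theta$ as $\xi \to 0$, which is transparent from the definition of $k$.
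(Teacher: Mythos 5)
Your proposal is correct and follows exactly the paper's own argument: evaluate the preceding theorem at $\xi=0$, observe that $k$ reduces to $\theta$, and identify $(T_{\nu_{\rho,\theta}}\Phi)(0)$ with the expectation via the stated remark. No further comment is needed.
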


\begin{proof}
	
	For $\xi = 0$, we have 
	
	\begin{eqnarray*} 
		\mathbb{E}_{\nu_{\rho,\theta}}(\delta(\langle \cdot, \eta\rangle))&=&
		T_{\nu_{\rho,\theta}}(\delta(\langle \cdot, \eta \rangle))(0)\\
		&=&\frac{e^{-\theta}}{\sqrt{2\pi \langle \eta,\eta \rangle} \Gamma(\rho,\theta)} \Psi(1-\rho, 1/2 -\rho; \theta).\\
	\end{eqnarray*}
\end{proof}

%We introduce the multiplicative convolution for the Mellin transform.
%The convolution product is defined:
%\[ (f \star g)(x)=\int_0^\infty f(x/u)g(u)\frac{du}{u}, \]
%so that 
%\[ \mathcal{M}_{x}(f\star g)=\mathcal{M}_{x}(f)\mathcal{M}_{x}(g).\]
% We can extend this definition defining:

% \[  (f \star g)(x)_{\mu,\gamma}=x^\mu\int_0^\infty u^\gamma f(x/u)g(u)du \]

%(I used this to rewrite the average of $\detla_a$, other wise I cannot interpret it in other ways...a good reference is still missing for this :( )

\begin{theorem}
	Let $\eta \in L^2_{\mathbb{C}}$ and $a \in \mathbb{R}$. Then
	
	\[ \delta_a(\langle \cdot, \eta \rangle)=\lim_{n \to \infty}\frac{1}{2\pi}\int_{-n}^{n} e^{i s ( \langle \cdot, \eta\rangle -a )} ds \text{ in } \big( S\big)^{-1}_{\nu_{\rho,\theta}} \]
	and
	\[ \mathbb{E}(\delta_a(\langle \cdot, \eta \rangle))=\frac{e^{-\theta}}{\sqrt{2\pi \langle \eta, \eta \rangle}\Gamma(\rho,\theta)}\sum_{n\geq 0} \frac{(-1)^n}{n!}\frac{a^{2n}}{2^n \|\eta \|^{2n}} \Psi(1-\rho, \frac{1}{2} - n - \rho;\theta).
	\]

\end{theorem}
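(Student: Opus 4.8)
The plan is to realize the shifted Donsker delta as the strong limit of its symmetric truncations
\[ \Phi_n := \frac{1}{2\pi}\int_{-n}^{n} e^{is(\langle \cdot, \eta\rangle - a)}\,\mathrm{d}s \in (\mathcal{S})^{-1}_{\nu_{\rho,\theta}}, \]
and to verify the hypotheses of the convergence criterion, Theorem \ref{thmStransConverg}. First I would compute the $T_{\nu_{\rho,\theta}}$-transform of each truncation: pulling the finite $s$-integral out of the dual pairing (each $e^{is(\langle\cdot,\eta\rangle-a)}$ is a test function and $[-n,n]$ is bounded) and invoking Lemma \ref{lemdonsker} gives
\[ (T_{\nu_{\rho,\theta}}\Phi_n)(\xi) = \frac{1}{2\pi\,\Gamma(\rho,\theta)}\int_{-n}^{n} e^{-isa}\,\Gamma\!\big(\rho,\theta + S(s;\eta,\xi)\big)\,\mathrm{d}s, \]
with $S(s;\eta,\xi) = \tfrac12(s^2\langle\eta,\eta\rangle + \langle\xi,\xi\rangle + 2s\langle\eta,\xi\rangle)$. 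Since the $T$- and $S$-transforms differ only by the nonvanishing holomorphic factor $\Gamma(\rho,\theta)/\Gamma(\rho,\theta-\tfrac12\langle\xi,\xi\rangle)$ on $U_{p,q}$, checking the hypotheses for the $T$-transform is equivalent to checking them for the $S$-transform.

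For hypothesis (i) I would fix $\xi \in U_{p,q}$ and use the absolute integrability established in Lemma \ref{lemdonsker}: the integrand $s\mapsto e^{-isa}\Gamma(\rho,\theta+S(s;\eta,\xi))$ lies in $L^1(\mathbb{R})$, so the tails $\int_{|s|>n}$ vanish and $\{(T_{\nu_{\rho,\theta}}\Phi_n)(\xi)\}_n$ is Cauchy. For hypothesis (ii), each $(T_{\nu_{\rho,\theta}}\Phi_n)(\cdot)$ is holomorphic on $U_{p,q}$, being a finite integral over $[-n,n]$ of the $\xi$-holomorphic integrand; moreover the bound at the end of the proof of Lemma \ref{lemdonsker},
\[ \frac{\sqrt{2\pi}\,C_\rho}{\Gamma(\rho,\theta)\sqrt{\langle\eta,\eta\rangle}}\int_1^\infty e^{-\theta r/2}\,r^{-1}(r-1)^{-\rho}\,\mathrm{d}r, \]
is independent of both $n$ and $\xi\in U_{p,q}$, and thus furnishes the uniform constant $C$. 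Theorem \ref{thmStransConverg} then produces a strong limit in $(\mathcal{S})^{-1}_{\nu_{\rho,\theta}}$, which we name $\delta_a(\langle\cdot,\eta\rangle)$, giving the first display.

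For the expectation I would use $\mathbb{E}_{\nu_{\rho,\theta}}(\delta_a) = (T_{\nu_{\rho,\theta}}\delta_a)(0)$ together with continuity of the $T$-transform along the convergent sequence, so that at $\xi=0$ (where $S(s;\eta,0)=\tfrac12 s^2\|\eta\|^2$)
\[ \mathbb{E}(\delta_a) = \frac{1}{2\pi\,\Gamma(\rho,\theta)}\int_{\mathbb{R}} e^{-isa}\,\Gamma\!\big(\rho,\theta+\tfrac12 s^2\|\eta\|^2\big)\,\mathrm{d}s. \]
Inserting the Laplace representation $\Gamma(\rho,\theta+x)/\Gamma(\rho,\theta) = \int_0^\infty e^{-rx} f_{\rho,\theta}(r)\,\mathrm{d}r$ coming from \eqref{defdensity}, applying Fubini (justified by the $L^1$ bound of Lemma \ref{lemdonsker} at $\xi=0$), and evaluating the Gaussian integral
\[ \int_{\mathbb{R}} e^{-isa - \tfrac12 r\|\eta\|^2 s^2}\,\mathrm{d}s = \sqrt{\tfrac{2\pi}{r\|\eta\|^2}}\,e^{-a^2/(2r\|\eta\|^2)} \]
leaves a single integral in $r$. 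Expanding $e^{-a^2/(2r\|\eta\|^2)} = \sum_{n\geq 0}\frac{(-1)^n}{n!}\frac{a^{2n}}{2^n\|\eta\|^{2n}}r^{-n}$, interchanging sum and integral, and substituting $u=r-1$ reduces the $n$-th term to the Tricomi integral representation $\int_0^\infty e^{-\theta u}u^{-\rho}(1+u)^{-3/2-n}\,\mathrm{d}u = \Gamma(1-\rho)\,\Psi(1-\rho,\tfrac12-n-\rho;\theta)$, exactly as at the step marked $(\ast\ast\ast)$ of the preceding proof, which assembles into the claimed series.

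The main obstacle will be justifying the term-by-term integration of the power series. Since $f_{\rho,\theta}$ is supported on $r>1$ we have $r^{-n}\leq 1$ there, so the absolute value of the $n$-th contribution is dominated by $\frac{1}{n!}\big(\tfrac{a^2}{2\|\eta\|^2}\big)^n\int_1^\infty f_{\rho,\theta}(r)r^{-1/2}\,\mathrm{d}r$; summing the resulting exponential series shows via Tonelli that the interchange is legitimate and, at the same time, establishes absolute convergence of the final series in $n$. (For complex $\eta$ one first runs the argument for real $\eta$ and extends by the identity theorem, as was done to pass from real to complex parameters in Lemma \ref{Lap in 1}.)
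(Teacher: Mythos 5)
Your proposal is correct and follows essentially the same route as the paper: truncate to $\Phi_n$, compute the $T_{\nu_{\rho,\theta}}$-transform via Lemma \ref{lemdonsker}, pass to the limit, and then obtain the expectation at $\xi=0$ through the Bernstein/Laplace representation, the Gaussian integral, the series expansion of $e^{-a^2/(2r\|\eta\|^2)}$, and the Tricomi integral. Your version is in fact slightly more explicit than the paper's in two places --- you invoke Theorem \ref{thmStransConverg} directly with a uniform bound rather than appealing informally to dominated convergence, and you justify the term-by-term integration via Tonelli using $r^{-n}\leq 1$ on the support $r>1$ --- but these are refinements of the same argument, not a different one.
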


\begin{proof}
	Let $n\in \mathbb{N}$, $\Phi_n(\omega):= (2 \pi)^{-1} \int_{-n}^{n} e^{is(\langle \omega, \eta\rangle -a)}ds$  for almost all $\omega \in \mathcal{S}'$ and $\phi_n(x):=1_{[-n,n]}(x)e^{-ixa}\frac{\Gamma(\rho,\theta+S(s;\eta,\xi))}{\Gamma(\rho,\theta)}$ for $x \in \mathbb{R}$. We have $ \Phi_n \in L^2(\nu_{\rho,\theta})$ and $\phi_n \in L^1(\mathbb{R},dx)$ for each $n$ and $\phi_n$ converges to $\phi$ in $L^1(\mathbb{R},dx)$, as in Lemma \ref{lemdonsker}.\\

	We can apply dominated convergence to the $T_{\nu_{\rho,\theta}}$-transform of $\Phi_n$, so that for $\xi \in U_{p,q}$ we have 
	
	\begin{eqnarray*} 
		T_{\nu_{\rho,\theta}} (\delta_a(\langle \cdot, \eta \rangle))(\xi)&=&\frac{1}{2\pi\Gamma(\rho,\theta)} \int_{\mathbb{R}} e^{-isa} \Gamma(\rho, \theta + S(s,\eta,\xi))\mathrm{d}s \\ 
		&=&\frac{1}{2\pi\Gamma(\rho,\theta)} \int_{\mathbb{R}} e^{-isa} \Gamma(\rho, \theta + \frac{1}{2}(s^2\langle \eta,\eta \rangle + \langle \xi,\xi\rangle +2s\langle \eta,\xi \rangle))\mathrm{d}s\\
		&=&\frac{1}{2\pi} \int_{0}^\infty f_{\rho,\theta}(r) e^{-\frac{1}{2}r \langle \xi,\xi\rangle}\int_{\mathbb{R}} e^{-\frac{1}{2} r s^2 \langle \eta,\eta \rangle -s (r \langle \xi,\eta\rangle +ia) }\mathrm{d}s\mathrm{d}r\\
		&=& \frac{1}{\sqrt{2\pi \langle \eta, \eta \rangle}}\int_0^\infty f_{\rho,\theta} (r) r^{-1/2} e^{-\frac{1}{2} r \langle \xi,\xi\rangle } e^{\frac{r\langle \xi,\eta \rangle^2}{2\langle \eta,\eta \rangle}+\frac{ia\langle \xi,\eta \rangle }{\langle \eta,\eta \rangle}-\frac{a^2}{2\langle \eta,\eta \rangle r} }\mathrm{d}r\\
		&=&\frac{e^{\frac{ia\langle \xi,\eta \rangle }{\langle \eta,\eta \rangle}}}{\sqrt{2\pi \langle \eta, \eta \rangle}} \int_0 ^\infty e^{-r k} f_{\rho,\theta}(r)r^{-\frac{1}{2}} e^{-\frac{a^2}{2 r \langle \eta,\eta \rangle}}\mathrm{d}r \\
		&=&\frac{e^{\frac{ia\langle \xi,\eta \rangle }{\langle \eta,\eta \rangle}}}{\sqrt{2\pi \langle \eta, \eta \rangle }\Gamma(1-\rho)\Gamma(\rho,\theta)}\int_{1}^\infty e^{-r k-\frac{a^2}{2 \langle \eta,\eta \rangle}r^{-1}}r^{-3/2}\frac{e^{-\theta r}}{(r-1)^{\rho}}\mathrm{d}r\\
		&\overset{*}{=}&\frac{e^{\frac{ia\langle \xi,\eta \rangle }{\langle \eta,\eta \rangle}}}{\sqrt{2\pi \langle \eta, \eta \rangle}\Gamma(1-\rho)\Gamma(\rho,\theta)}\int_0^\infty e^{-(u+1) (k+\theta)-\frac{a^2}{2 \langle \eta,\eta \rangle}(u+1)^{-1}}(u+1)^{-3/2} u^{-\rho}\mathrm{d}u
	\end{eqnarray*}
	%svolgere serie l'esponenziale con {-a^2/} e usare formula con prodotti di G e H functions? percorso già visto?%
	where $k=\frac{1}{2}\langle \xi,\xi \rangle - \frac{\langle \xi,\eta \rangle^2}{2\langle \eta,\eta \rangle}\geq0$ and with $u=r-1$.\\
	We use the analyticity of the function $\exp(k_1(u+1)^{-1})$ for $u>0$ in order to represent $T_{\nu_{\rho,\theta}} (\delta_a(\langle \cdot, \eta \rangle))(\xi)$ as a series.\\
	\begin{eqnarray*}
		T_{\nu_{\rho,\theta}} (\delta_a(\langle \cdot, \eta \rangle))(\xi)&=&k_2\sum_{n\geq 0} \frac{(-1)^n k_1^n}{n!} \int_0^\infty e^{-u(k+\theta)}(u+1)^{-n-3/2}u^{-\rho}\mathrm{d}u\\
		&\overset{**}{=}&k_2\sum_{n\geq 0} \frac{(-1)^n k_1^n}{n!} \Psi(1-\rho, \frac{1}{2} - n - \rho;k+\theta).
	\end{eqnarray*}
	
	where we use 1 of 2.1.3 in \cite{PrudIV} in $**$ and $\Psi(a,b;z)$ is the Tricomi confluent hypergeometric function, $k_1=\dfrac{a^2}{2 \|\eta \|^2}$ and  $k_2=\dfrac{e^{-k-\theta}e^{\frac{ia\langle \xi,\eta \rangle }{\langle \eta,\eta \rangle}}}{\sqrt{2\pi \langle \eta, \eta \rangle}\Gamma(\rho,\theta)}$.\\
	For $\xi=0$ we have that
	
	\begin{eqnarray*}
		\mathbb{E}(\delta_a(\langle \cdot, \eta \rangle))&=&T_{\nu_{\rho,\theta}} (\delta_a(\langle \cdot, \eta \rangle))(0)\\
		&=& \frac{e^{-\theta}}{\sqrt{2\pi \langle \eta, \eta \rangle}\Gamma(\rho,\theta)}\sum_{n\geq 0} \frac{(-1)^n}{n!}\frac{a^{2n}}{2^n \|\eta \|^{2n}} \Psi(1-\rho, \frac{1}{2} - n - \rho;\theta).\\
	\end{eqnarray*}
	For $\xi=0$,  we can represent the expectation of Donsker's Delta as:
	
	\begin{eqnarray*}
		T_{\nu_{\rho,\theta}}  (\delta_a(\langle \cdot, \eta \rangle))(0)&=&\frac{1 }{\sqrt{2\pi \langle \eta, \eta \rangle}} \int_0 ^\infty  f_{\rho,\theta}(r)r^{-\frac{1}{2}} e^{-\frac{a^2}{2 r \langle \eta,\eta \rangle}}\mathrm{d}r\\
		&=& \int_0 ^\infty \frac{ e^{-\frac{a^2}{2 r \langle \eta,\eta \rangle}}}{\sqrt{2\pi \langle \eta, \eta \rangle r}}f_{\rho,\theta}(r)\mathrm{d}r \\
		&=& \int_{-\infty}^{\infty} \delta_a(x)p(x;\eta,\rho,\theta) \mathrm{d}x \\
	\end{eqnarray*}
	where $p(x;\eta,\rho,\theta)$ is the density of a centered Gaussian r.v. with variance $R\|\eta \|^2$ and $R$ is a non negative random variable with density $f_{\rho,\theta}$.
\end{proof}

\begin{remark}
	We have that for $\rho=1$, the expectation of the Donsker's Delta on $(\mathcal{S}',\mathcal{B},\nu_{\rho,\theta})$ coincides with the one in the White Noise case.\\
	Indeed, for $\rho=1$, we have that $\Psi(0, \frac{1}{2} - n;\theta)=1$ (see Equation 13.6.3 in \cite{dlmf}) and $\Gamma(1,\theta)=e^{-\theta}$.
\end{remark}
\section{Grey Incomplete Gamma Noise}
\label{sec4} 

In order to present the Gamma Grey Brownian Motion, we introduce the fractional operator $M_{-}^{\alpha /2}$ defined, for
any $f\in \mathcal{S}$, as
\begin{equation*}
	M_{-}^{\alpha /2}f:=\left\{
	\begin{array}{l}
		\sqrt{C_{\alpha }}D_{\_}^{(1-\alpha )/2}f,\qquad \alpha \in (0,1) \\
		f,\qquad \qquad \qquad \qquad \alpha =1 \\
		\sqrt{C_\alpha}I_{\_}^{(\alpha -1)/2}f, \qquad \alpha \in (1,2)%
	\end{array}
	\right. ,
\end{equation*}
where
\begin{equation*}
	D_{\_}^{\beta }\,f(x):=-\frac{1}{\Gamma (1-\beta )}\frac{d}{\mathrm{d}x}
	\int_{x}^{\infty }f(t)(t-x)^{-\beta }\mathrm{d}t,\quad x\in \mathbb{R},\;\beta \in
	(0,1),
\end{equation*}
is the Riemann-Liouville fractional derivative and

\begin{equation*}
	I_{\_}^{\beta }\,f(x):=\frac{1}{\Gamma (\beta )} \int_{x}^{\infty
	}f(t)(t-x)^{\beta-1 }\mathrm{d}t,\quad x\in \mathbb{R},\;\beta \in (0,1),
\end{equation*}
is the Riemann-Liouville fractional integral, see \cite{GrotJahnI}.\\
In a similar way, we define the fractional operator $M_{+}^{\alpha/2}$, which is defined by means of the lower Riemann-Liouville fractional derivative and the lower Riemann-Liouville fractional integral.

The tempered $\Gamma$-grey Brownian motion  (hereafter $\Gamma$
-GBM) is defined as $B_{\alpha ,\rho }^{\theta }(t,\omega ):=\left\langle \omega ,M_{-}^{\alpha
	/2}\mathbbm{1}_{[0,t)}\right\rangle$ for $t\geq 0$ and a.e.-$\omega \in \mathcal{S}^{\prime
}( \mathbb{R})$ where $\alpha \in (0,2)$, $\rho \in (0,1]$ and $\theta>0$.\\
Thus, for $\xi \in \mathcal{U}_{\theta/2} \subset \mathcal{U}_{\theta}$, we can apply the $S$-transform on it and obtain

\begin{eqnarray}
	S_{\nu_{\rho,\theta}}(B_{\alpha ,\rho }^{\theta }(t,\cdot))(\xi)&=&\frac{\Gamma(\rho,\theta)}{\Gamma(\rho,\theta - \frac{1}{2}\|\xi\|^2)}\int_{\mathcal{S}'}\langle \omega ,M_{-}^{\alpha
		/2}\mathbbm{1}_{[0,t)}\rangle e^{\langle \omega, \xi\rangle } \nu_{\rho,\theta}(\mathrm{d}\omega) \label{momentlaplace}\\
	&=&\ell^{-1}_{\nu}(\xi) \int_{\mathcal{S}'} \frac{\partial}{\partial s} e^{\langle \omega, \xi\rangle + s\langle \omega,M_{-}^{\alpha
			/2}\mathbbm{1}_{[0,t)}  \rangle}\Big|_{s=0} \nu_{\rho,\theta}(\mathrm{d}\omega)\\
	&=&\ell^{-1}_{\nu}(\xi) \frac{\partial}{\partial s}\int_{\mathcal{S}'}  e^{\langle \omega, \xi + sM_{-}^{\alpha
			/2}\mathbbm{1}_{[0,t)}  \rangle} \nu_{\rho,\theta}(\mathrm{d}\omega)\Big|_{s=0},
\end{eqnarray}
where we can exchange the integral and the derivative due fact that the integral in line (\ref{momentlaplace}) is finite due to Cauchy-Schwartz inequality, $\Gamma$
-GBM is in $L^2(\nu_{\rho,\theta})$ and the Laplace transform (\ref{Laptransf}) is finite for $\xi \in \mathcal{U}_{\theta/2}$.

We have that $\int_{\mathcal{S}'}  e^{\langle \omega, \xi + sM_{-}^{\alpha
		/2}\mathbbm{1}_{[0,t)}  \rangle} \nu_{\rho,\theta}(\mathrm{d}\omega)= \frac{1}{\Gamma(\rho,\theta)} \Gamma(\rho, \theta + \frac{1}{2} \|\xi + sM_{-}^{\alpha
	/2}\mathbbm{1}_{[0,t)} \|^2)$, such that
\[ \frac{\partial}{\partial s}\int_{\mathcal{S}'}  e^{\langle \omega, \xi + sM_{-}^{\alpha/2}\mathbbm{1}_{[0,t)}\rangle} \nu_{\rho,\theta}(d\omega)\] \[\qquad \qquad = -e^{-(\theta + 1/2\| \xi + sM_{-}^{\alpha
		/2}\mathbbm{1}_{[0,t)} \|^2)} \Bigg( \theta + \frac{ \| \xi + sM_{-}^{\alpha
		/2}\mathbbm{1}_{[0,t)} \|^2}{2} \Bigg)^{\rho-1} \Big( s \|M_{-}^{\alpha
	/2}\mathbbm{1}_{[0,t)}\|^2 + \langle \xi, M_{-}^{\alpha
	/2}\mathbbm{1}_{[0,t)}  \rangle \Big), \]
thus
\[ \frac{\partial}{\partial s}\int_{\mathcal{S}'}  e^{\langle \omega, \xi + sM_{-}^{\alpha
		/2}\mathbbm{1}_{[0,t)}  \rangle} \nu_{\rho,\theta}(d\omega)\Big|_{s=0}= -e^{-(\theta + \| \xi \|^2)} \Bigg( \theta + \frac{ \| \xi  \|^2}{2} \Bigg)^{\rho-1} \Big( \langle \xi, M_{-}^{\alpha
	/2}\mathbbm{1}_{[0,t)}  \rangle \Big).\]

Hence, we get

\[ S_{\nu_{\rho,\theta}}(B_{\alpha ,\rho }^{\theta }(t,\cdot))(\xi)\]
\begin{equation}\label{StransfGammaGBM}
	\qquad \qquad=  \frac{\Gamma(\rho,\theta)}{\Gamma(\rho,\theta -\frac{1}{2}\|\xi\|^2)}(-e^{-(\theta + \| \xi \|^2)} )\Bigg( \theta + \frac{ \| \xi  \|^2}{2} \Bigg)^{\rho-1} \Big( \langle \xi, M_{-}^{\alpha
		/2}\mathbbm{1}_{[0,t)}  \rangle \Big). \end{equation}

Note that for $\alpha \in (0,2)$, $\langle \xi, M_{-}^{\alpha
	/2}\mathbbm{1}_{[0,t)}\rangle=\langle M_{+}^{\alpha/2}\xi,\mathbbm{1}_{[0,t)}\rangle=\int_{0}^tM_{+}^{\alpha/2}\xi(x)dx$, see (5.17) in \cite{skm93}.

Now we establish that the $\Gamma$-GBM is the differentiable in $(\mathcal{S})^{-1}_{\nu_{\rho,\theta}}$ and the existence of its noise in $(\mathcal{S})^{-1}_{\nu_{\rho,\theta}}$ is guaranteed applying Theorem \ref{thmStransConverg}.

\begin{theorem}
	Let $\alpha \in (0,2)$, $\rho \in (0,1]$ and $\theta>0$.
	We define the Grey Incomplete Gamma Noise in $(\mathcal{S})^{-1}_{\nu_{\rho,\theta}}$ as,
	
	\[ N_t^{\alpha,\rho,\theta}(\omega):=\lim_{h \to 0} \frac{B_{\alpha ,\rho }^{\theta }(t+h,\omega)- B_{\alpha ,\rho }^{\theta }(t,\omega) }{h}, \quad \text{a.e.-}\omega \in \mathcal{S}'\]
	and for every $\xi \in U_\theta$ we have
	
	\[  S_{\nu_{\rho,\theta}} \big(N_t^{\alpha,\rho,\theta}\big)(\xi)=-\frac{\Gamma(\rho,\theta)}{\Gamma(\rho,\theta -\frac{1}{2}\|\xi\|^2)}e^{-
		\theta - \| \xi \|^2}\Bigg( \theta + \frac{ \| \xi  \|^2}{2} \Bigg)^{\rho-1}(M_{+}^{\alpha
		/2} \xi)(t).\]

\end{theorem}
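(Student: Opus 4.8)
The plan is to exploit Theorem \ref{thmStransisomor}, which lets us verify convergence in $(\mathcal{S})^{-1}_{\nu_{\rho,\theta}}$ entirely on the level of $S$-transforms via the criterion of Theorem \ref{thmStransConverg}. Since $S_{\nu_{\rho,\theta}}$ is linear, the transform of the difference quotient defining $N_t^{\alpha,\rho,\theta}$ equals
\[ \frac{1}{h}\Big( S_{\nu_{\rho,\theta}}(B_{\alpha,\rho}^{\theta}(t+h,\cdot))(\xi) - S_{\nu_{\rho,\theta}}(B_{\alpha,\rho}^{\theta}(t,\cdot))(\xi)\Big), \]
and each term is given explicitly by (\ref{StransfGammaGBM}). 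First I would isolate the common prefactor
\[ g(\xi):=-\frac{\Gamma(\rho,\theta)}{\Gamma(\rho,\theta-\frac{1}{2}\|\xi\|^2)}\,e^{-(\theta+\|\xi\|^2)}\Big(\theta+\tfrac{\|\xi\|^2}{2}\Big)^{\rho-1}, \]
which is independent of $t$, so that using $\langle \xi, M_-^{\alpha/2}\mathbbm{1}_{[0,t)}\rangle=\int_0^t M_+^{\alpha/2}\xi(x)\,\mathrm{d}x$ the difference quotient's $S$-transform collapses to
\[ g(\xi)\,\frac{1}{h}\int_t^{t+h} M_+^{\alpha/2}\xi(x)\,\mathrm{d}x. \]

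The central observation is that the only surviving $h$-dependence is the averaged integral of $M_+^{\alpha/2}\xi$. For $\xi\in\mathcal{S}_{\mathbb{C}}$ the function $M_+^{\alpha/2}\xi$ is continuous, so by the fundamental theorem of calculus the average converges pointwise, $\frac{1}{h}\int_t^{t+h}M_+^{\alpha/2}\xi(x)\,\mathrm{d}x\to (M_+^{\alpha/2}\xi)(t)$ as $h\to0$. Choosing any null sequence $h_n\to0$ and setting $\Phi_n:=h_n^{-1}\big(B_{\alpha,\rho}^{\theta}(t+h_n,\cdot)-B_{\alpha,\rho}^{\theta}(t,\cdot)\big)$, this yields pointwise convergence of $S_{\nu_{\rho,\theta}}(\Phi_n)(\xi)$ to $g(\xi)(M_+^{\alpha/2}\xi)(t)$, and in particular makes $\{S_{\nu_{\rho,\theta}}(\Phi_n)(\xi)\}_n$ a Cauchy sequence for every fixed $\xi$; this verifies hypothesis (i) of Theorem \ref{thmStransConverg}.

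It remains to check hypothesis (ii): holomorphy on some $U_{p,q}$ together with a uniform bound in $n$. Holomorphy is immediate, since $g$ is holomorphic on the region where $\frac{1}{2}\|\xi\|^2<\theta$, where $\Gamma(\rho,\theta-\frac{1}{2}\langle\xi,\xi\rangle)$ is nonvanishing, and $\xi\mapsto\int_t^{t+h_n}M_+^{\alpha/2}\xi(x)\,\mathrm{d}x$ is linear, hence entire. For the uniform bound I would fix $p,q$ so that $U_{p,q}\subset\{\xi:\tfrac{1}{2}\|\xi\|^2<\theta\}$, on which $|g(\xi)|$ is bounded by a constant depending only on $\rho,\theta$, and estimate the averaged integral by $\sup_x|M_+^{\alpha/2}\xi(x)|$, uniformly in $h_n$. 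The main obstacle is exactly this last estimate: one must control $\|M_+^{\alpha/2}\xi\|_\infty$ by a Hilbert--Sobolev norm $\|\xi\|_p$ entering the definition of $U_{p,q}$, i.e. establish a continuity bound $\|M_+^{\alpha/2}\xi\|_\infty\le C\|\xi\|_p$ for the fractional operator, uniformly over $\xi\in U_{p,q}$. Granting this, which follows from the mapping properties of $M_+^{\alpha/2}$ on $\mathcal{S}$ together with the continuous embeddings of the nuclear space, one obtains the uniform bound $|S_{\nu_{\rho,\theta}}(\Phi_n)(\xi)|\le C'$ for all $n$ and all $\xi\in U_{p,q}$. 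Theorem \ref{thmStransConverg} then guarantees that $\Phi_n$ converges strongly to an element $N_t^{\alpha,\rho,\theta}\in(\mathcal{S})^{-1}_{\nu_{\rho,\theta}}$, and by Theorem \ref{thmStransisomor} its $S$-transform is the pointwise limit $g(\xi)(M_+^{\alpha/2}\xi)(t)$, which is precisely the claimed formula.
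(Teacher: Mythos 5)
Your proposal follows essentially the same route as the paper: apply the explicit $S$-transform formula for $B_{\alpha,\rho}^{\theta}$, reduce the difference quotient to the averaged integral $\tfrac{1}{h}\int_t^{t+h}M_+^{\alpha/2}\xi(x)\,\mathrm{d}x$, bound it by $\sup_x|M_+^{\alpha/2}\xi(x)|$, and invoke Theorem \ref{thmStransConverg}. If anything you are slightly more careful than the paper in flagging that the bound in hypothesis (ii) must be uniform in $\xi\in U_{p,q}$, which requires the continuity estimate $\|M_+^{\alpha/2}\xi\|_\infty\le C\|\xi\|_p$ that the paper leaves implicit.
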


\begin{proof}
	For $t\geq 0$, we define the sequence 
	\[\mathscr{N}_{t,n}(\omega):=\frac{B_{\alpha ,\rho }^{\theta }(t+h_n,\omega)- B_{\alpha ,\rho }^{\theta }(t,\omega)}{ h_n} \in (\mathcal{S})^{-1}_{\nu_{\rho,\theta}}\]
	for a.e.-$\omega \in \mathcal{S}'$ and $n \in \mathbb{N}$  where $\{h_n\}_{n\in \mathbb{N}}$ are such that $h_n \to 0$ for $n \to \infty$.\\
	We apply the $S$-transform for $\xi \in U_\theta$
	
	\begin{eqnarray*}
		S_{\nu_{\rho,\theta}} \big(\mathscr{N}_{t,n} \big)(\xi)&=&\frac{1}{h_n}\Big( S_{\nu_{\rho,\theta}} (B_{\alpha ,\rho }^{\theta }(t+h_n))(\xi) - S_{\nu_{\rho,\theta}} (B_{\alpha ,\rho }^{\theta }(t))(\xi)\Big)  \\
		&=&-\frac{\Gamma(\rho,\theta)}{\Gamma(\rho,\theta -\frac{1}{2}\|\xi\|^2)}e^{-
			\theta - \| \xi \|^2}\Bigg( \theta + \frac{ \| \xi  \|^2}{2} \Bigg)^{\rho-1}\frac{ \langle \xi, M_{-}^{\alpha
				/2}1_{[t+h_n,t)}  \rangle }{h_n}.\\
	\end{eqnarray*}
	By the fact that
	\[h_n^{-1}|\langle \xi, M_{-}^{\alpha
		/2}1_{[t+h_n,t)}  \rangle|\leq h_n^{-1}\int_{t}^{t+h_n} |M_{+}^{\alpha
		/2} \xi(x)| dx\leq \max_{x \in \mathbb{R}}(|M_{+}^{\alpha
		/2} \xi(x)|)\]
	and that for $\xi \in U_\theta $
	
	\[ |-\frac{\Gamma(\rho,\theta)}{\Gamma(\rho,\theta -\frac{1}{2}\|\xi\|^2)}e^{-
		\theta -\| \xi \|^2}\Bigg( \theta + \frac{ \| \xi  \|^2}{2} \Bigg)^{\rho-1}   |     < e^{-\theta} \theta^{\rho-1}\] 
	
	we have that, for each $n \in \mathbb{N}$, and $\xi \in U_\theta $
	
	\[  |S_{\nu_{\rho,\theta}} \big(\mathscr{N}_{t,n} \big)(\xi)|<e^{-\theta} \theta^{\rho-1} \max_{x \in \mathbb{R}}(|M_{+}^{\alpha
		/2} \xi(x)|)<\infty ,\]
	we note that $M_{+}^{\alpha
		/2} \xi(\cdot) \in \mathcal{S}$.
	So we have,
	
	\begin{eqnarray*} 
		\lim_{n \to \infty}S_{\nu_{\rho,\theta}} \big(\mathscr{N}_{t,n} \big)(\xi)&=&\lim_{n \to \infty} -\frac{\Gamma(\rho,\theta)}{\Gamma(\rho,\theta -\frac{1}{2}\|\xi\|^2)}e^{-
			\theta - \| \xi \|^2}\Bigg( \theta + \frac{ \| \xi  \|^2}{2} \Bigg)^{\rho-1}\frac{ \langle \xi, M_{-}^{\alpha
				/2}1_{[t+h_n,t)}  \rangle }{h_n}\\
		&=&-\frac{\Gamma(\rho,\theta)}{\Gamma(\rho,\theta -\frac{1}{2}\|\xi\|^2)}e^{-
			\theta - \| \xi \|^2}\Bigg( \theta + \frac{ \| \xi  \|^2}{2} \Bigg)^{\rho-1}(M_{+}^{\alpha
			/2} \xi)(t).\\
	\end{eqnarray*}
	Thus, $\{S_{\nu_{\rho,\theta}} \big(\mathscr{N}_{t,n} \big)(\xi)\}_n$ is a Cauchy sequence for each $\xi$.\\
	By the fact that $S_{\nu_{\rho,\theta}} \big(\mathscr{N}_{t,n} \big)(\xi)$ is holomorphic on $U_\theta$ and it is finite for each $\xi \in U_\theta$, we can apply Theorem \ref{thmStransConverg} for the convergence of $\mathscr{N}_{t,n}$ to $\mathscr{N}_t$.
\end{proof}

\section{Grey Incomplete Gamma Ornstein-Uhlenbeck Process}
\label{sec5} 
We now compute the solution of Langevin Equation driven by Gamma Grey Brownian motion:

\begin{equation}\label{OUprocessdiffform}
	dX^{\alpha,\rho,\theta}(t)=-\lambda X^{\alpha,\rho,\theta}(t) dt + \kappa d B^{\theta}_{\alpha,\rho}(t), \quad  X_0^{\alpha,\rho,\theta}=x_0,  \end{equation}

where $\kappa \in \mathbb{R}$, $\lambda>0$ and $x_0 \in \ \mathbb{R}$.\\
We consider the weak integral form of Equation (\ref{OUprocessdiffform}):

\begin{equation}\label{OUprocessinteform}
	X^{\alpha,\rho,\theta}(t)=x_0-\lambda \int_0^tX^{\alpha,\rho,\theta}(s) ds + \kappa B^{\theta}_{\alpha,\rho}(t) \end{equation}
and we solve it applying the S-transform for $\xi \in U_\theta$ and using Equation (\ref{StransfGammaGBM}):

\[  S_{\nu_{\rho,\theta}} \Big( X^{\alpha,\rho,\theta}(t)  \Big)(\xi)= x_0-\lambda \int_0^tS_{\nu_{\rho,\theta}} \Big( X^{\alpha,\rho,\theta}(s)  \Big)(\xi)ds +\kappa C_{\rho,\theta}(\xi) \langle \xi, M_{-}^{\alpha
	/2}\mathbbm{1}_{[0,t)}  \rangle \]
where 
\[C_{\rho,\theta}(\xi):= -\frac{\Gamma(\rho,\theta)}{\Gamma(\rho,\theta -\frac{1}{2}\|\xi\|^2)}e^{-(\theta + \| \xi \|^2)}\Bigg( \theta + \frac{ \| \xi  \|^2}{2} \Bigg)^{\rho-1}.\]

By Theorem \ref{thmStransisomor}, we have that $x(s):=S_{\nu_{\rho,\theta}} \Big( X^{\alpha,\rho,\theta}(s)  \Big)(\xi)$ is an holomorphic function, so we can differentiate it:

\[  \frac{d}{dt} x(t)= -\lambda x(t) +k C_{\rho,\theta}(\xi) \frac{d}{dt} \langle \xi, M_{-}^{\alpha
	/2}\mathbbm{1}_{[0,t)}  \rangle  \]

and solve it:
\begin{eqnarray*}
	S_{\nu_{\rho,\theta}} \Big( X^{\alpha,\rho,\theta}(t)  \Big)(\xi)&=&x(t)\\
	&=& x_0 e^{-\lambda t} + \kappa C_{\rho,\theta }(\xi) \Big( \langle \xi, M_{-}^{\alpha
		/2}\mathbbm{1}_{[0,t)}  \rangle - \lambda \int_0^t e^{-\lambda(t-s)}\langle \xi, M_{-}^{\alpha
		/2}1_{[0,s)}  \rangle ds \Big)\\
	&\overset{*}{=}&  x_0 e^{-\lambda t} + \kappa C_{\rho,\theta }(\xi) \langle \xi,  h_{\alpha,t} \rangle,\\
\end{eqnarray*}

where we used the linearity of inner product and changing the order of integration in $*$ and
\[ h_{\alpha,t}(u)= \left(M_{-}^{\alpha
	/2}\mathbbm{1}_{[0,t)}\right)(u)  - \lambda \int_0^t e^{-\lambda(t-s)} \left(M_{-}^{\alpha
	/2}1_{[0,s)}\right)(u)ds.   \]

By considering that $ S_{\nu_{\rho,\theta}}\Big( \langle \cdot,  f \rangle \Big)(\xi)=\langle \xi,  f \rangle $ for $f \in \mathcal{S}_{\mathbb{C}}$ and that we can express $h_{\alpha,\lambda}$ as the limit of functions in Schwartz for density, we can invert the S-transform and we get:

\[X^{\alpha,\rho,\theta}(t,\omega)= x_0 e^{-\lambda t} + \kappa B^{\theta}_{\alpha,\rho}(t,\omega) - \lambda \kappa \langle \omega, \int_0^t e^{-\lambda(t-s)} M_{-}^{\alpha
	/2}1_{[0,s)}ds \rangle. \]

\begin{definition}
	The solution of the Langevin Equation (\ref{OUprocessdiffform}) is called "Grey Incomplete Gamma Ornstein-Uhlenbeck process" or $\Gamma$-gOU and its characteristic function for $t\geq 0$ is
	
	\[   \mathbb{E}\big( e^{iwX^{\alpha,\rho,\theta}(t)}\big)= \exp^{iwx_0e^{-\lambda t}} \frac{\Gamma(\rho,\theta+ \frac{w^2}{2} \| h_{\alpha,t} \|^2 )}{\Gamma(\rho,\theta)}. \]
\end{definition}

\section{Appendix}
\label{appendix}

\begin{lemma}
	Let $\rho \in (0,1)$ and $z \in \mathbb{C}$ such that $\Re(z)>0$, then 
	\[ |\Gamma(\rho,z)|<\frac{1}{\cos(\arg(z))^{\rho}} \Gamma(\rho,\Re(z)). \]
\end{lemma}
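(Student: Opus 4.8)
The plan is to represent $\Gamma(\rho,z)$ as a contour integral along the ray emanating from $z$ in the direction $\arg z$ and then estimate the modulus directly. Write $z=|z|e^{i\phi}$ with $\phi:=\arg(z)$; since $\Re(z)>0$ we have $\phi\in(-\pi/2,\pi/2)$, hence $\cos\phi>0$. Starting from
\[ \Gamma(\rho,z)=\int_z^{\infty} t^{\rho-1}e^{-t}\,\mathrm{d}t, \]
I would take the integration path to be the ray $t=s\,e^{i\phi}$, $s\in[|z|,\infty)$, which is the natural contour making the integral converge (the real part $s\cos\phi$ tends to $+\infty$ because $\cos\phi>0$) and which agrees with the usual analytic continuation of the incomplete gamma function for $\Re(z)>0$. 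First I would fix the branch of $t^{\rho-1}$ (principal branch along the ray) and verify absolute convergence at infinity.

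Substituting $t=s\,e^{i\phi}$, $\mathrm{d}t=e^{i\phi}\mathrm{d}s$, pulls out a unimodular prefactor:
\[ \Gamma(\rho,z)=e^{i\rho\phi}\int_{|z|}^{\infty} s^{\rho-1}e^{-s\cos\phi}\,e^{-is\sin\phi}\,\mathrm{d}s. \]
Taking moduli and using $|e^{i\rho\phi}|=|e^{-is\sin\phi}|=1$ together with the triangle inequality for integrals gives
\[ |\Gamma(\rho,z)|\le\int_{|z|}^{\infty} s^{\rho-1}e^{-s\cos\phi}\,\mathrm{d}s. \]
Finally I would substitute $v=s\cos\phi$; since $s^{\rho-1}=v^{\rho-1}(\cos\phi)^{1-\rho}$ and $\mathrm{d}s=(\cos\phi)^{-1}\mathrm{d}v$, a factor $(\cos\phi)^{-\rho}$ comes out, and the lower limit becomes $|z|\cos\phi=\Re(z)$, yielding exactly
\[ |\Gamma(\rho,z)|\le\frac{1}{(\cos\phi)^{\rho}}\int_{\Re(z)}^{\infty} v^{\rho-1}e^{-v}\,\mathrm{d}v=\frac{1}{\cos(\arg z)^{\rho}}\,\Gamma(\rho,\Re(z)). \]

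For the strictness I would note that the triangle-inequality step is an equality only when the phase $e^{-is\sin\phi}$ is constant along the path, i.e. only when $\sin\phi=0$; hence the inequality is strict whenever $z\notin(0,\infty)$, which is the relevant case (for positive real $z$ both sides coincide and one only has equality). I expect the one genuine subtlety, the main obstacle, to be the justification of the contour representation: namely that the ray integral along $\arg t=\arg z$ converges absolutely, uses the correct branch of $t^{\rho-1}$, and reproduces the value of $\Gamma(\rho,z)$ employed elsewhere in the paper rather than a different branch. Everything after that is an elementary modulus estimate followed by a linear change of variables.
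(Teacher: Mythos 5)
Your proposal is correct and follows essentially the same route as the paper: integrate along the ray $t=se^{i\arg(z)}$, $s\in[|z|,\infty)$, bound the modulus of the integrand by $s^{\rho-1}e^{-s\cos(\arg z)}$, and substitute $v=s\cos(\arg z)$ to produce the factor $\cos(\arg z)^{-\rho}$ and the lower limit $\Re(z)$. Your remarks on the branch choice and on when the inequality is actually strict are more careful than the paper's own write-up, which silently passes from $\leq$ in the estimate to $<$ in the statement.
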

\begin{proof}
	By definition in \cite{dlmf}, we have that
	\[ \Gamma(\rho,z)=\int_{z}^{\infty} e^{-t}t^{\rho-1}dt \]
	such that the integral domain does not cross the real negative axis.\\
	We choose as integral domain the line $\mathcal{L}_{z}=\{w \in \mathbb{C}| w=re^{i \arg(z)}, \, r \in (|z|,+\infty)\}$.\\
	Applying $u=te^{-i\arg(z)}$ in what follows we have
	\begin{eqnarray*}
		|\Gamma(\rho,z)|&\leq&\int_{|z|}^\infty |e^{-u e^{i\arg(z)}}||(ue)^{i \arg(z)}|^{\rho-1}du\\
		&\leq&\int_{|z|}^\infty |e^{-u \cos(\arg(z))}||u|^{\rho-1}du\\
		&=&\frac{1}{\cos(\arg(z))^{\rho}}\int_{|z|\cos(\arg(z))}^\infty e^{-y}|y|^{\rho-1}dy,
	\end{eqnarray*}
	where in the last equality we use $y=u \cos(\arg(z))$.\\
	To conclude, we note that $\Re(z)=\Re(|z|e^{i\arg(z)})=|z|\cos(\arg(z))$.
\end{proof}

We note that for $\eta \in L^2$, $s \in \mathbb{R}$ and $\xi \in U_{p,q}$,  \[\Re(\theta+S(s;\eta,\xi))=\theta+\frac{1}{2}\| \xi_1 +s \eta\|^2 - \|\xi_2\|^2>0.\]
Hence, we can apply the above Lemma and note that

\[ \arg(\theta+S(s;\eta,\xi))=\arctan\left(\frac{2 \langle \xi_1+s\eta,\xi_2 \rangle}{\theta+\frac{1}{2}\| \xi_1 +s \eta\|^2 - \|\xi_2\|^2 }\right)\subset [-M,M], \]

for $s \in \mathbb{R}$ where $M\in(0,\pi/2)$. So we have that there exists $C_\rho>0$ such that 
\[ \frac{1}{\cos(\arg(z(s)))^\rho} < C_{\rho}, \quad s \in \mathbb{R}, \]

where $z(s)=\theta+S(s;\eta,\xi)$. We can also use $C_\rho:= \frac{1}{\cos(M)^{\rho}}$.

\section*{Acknowlegdements}
The authors thank J.~L.~da Silva for his valuable discussions about the project. L.C. wants to thank the Centro de Ci\^encias Matem\'aticas for the kind hospitality within his research stay in Funchal in spring 2023. We are grateful for the fruitful comments of L. Accardi which improved the study.

\end{document}